\let\leq\leqslant
\let\geq\geqslant
\title{Isometry groups of formal languages for generalized Levenshtein distances \footnote{The work of the author was supported by the RSF, project no. 22-11-00075}}
\author{Vladimir Yankovskiy}
\affil{Faculty of Mechanics and Mathematics of Moscow State University, Moscow 119991 Russia, Leninskie Gory, MSU.}
\affil{Moscow Center for Fundamental and Applied Mathematics, Russia}
\affil{vladimir\_yankovskiy@mail.ru}
\date{}
\begin{document}

\maketitle

\begin{abstract}

This article is a partial answer to the question of which groups can be represented as isometry groups of formal languages for generalized Levenshtein distances. Namely, it is proved that for any language the modulus of the difference between the lengths of its words and the lengths of their images under isometry for an arbitrary generalized Levenshtein distance that satisfies the condition that the weight of the replacement operation is less than twice the weight of the removal operation is bounded above by a constant that depends only on the language itself.
From this, in particular, it follows that the isometry groups of formal languages with respect to such metrics always embed into the group $\Pi_{n=1}^\infty S_{n}$.
We also construct a number of examples showing that this estimate is, in a certain sense, unimprovable.

MSC 2020: 05E18, 20B25, 20H15

\end{abstract}

\newtheorem{theorem}{Theorem}
\newtheorem{lemma}{Lemma}
\newtheorem{corollary}{Corollary}
\newtheorem{definition}{Definition}
\newtheorem{proposition}{Proposition}

\section{Introduction}

This paper is a partial answer to the question of which groups can be represented as isometry groups of formal languages for generalized Levenshtein distances. Namely, the following theorem is proved:

\begin{theorem}
Let $L$ be an arbitrary formal language, $d$ be an arbitrary generalized Levenshtein distance satisfying the condition that the weight of the replacement operation is less than twice the weight of the removal operation. Then there exists $m \in \mathbb{N}$ such that $||\phi(w)| - |w|| \leq m$
\end{theorem}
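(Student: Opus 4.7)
The plan is to exploit the elementary length bound $d(u,v) \geq w_d \cdot ||u|-|v||$ for the generalized Levenshtein distance, where $w_d$ denotes the weight of removal; this is immediate because insertions and deletions each change the length by one while substitutions preserve length. Applied to any isometry $\phi$ it yields the Lipschitz-type inequality $||\phi(u)| - |\phi(v)|| \leq d(u,v)/w_d$ for all $u, v \in L$.

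Next I would fix a shortest word $w_0 \in L$, set $\ell = |w_0|$, and turn the hypothesis $w_s < 2 w_d$ into an explicit upper bound for $d(w_0, w)$. The condition is precisely what ensures that the mixed strategy ``substitute the $\ell$ characters of $w_0$ to match an initial segment of $w$, then insert the remaining $|w|-\ell$ characters'' is not beaten by pure delete-then-insert; it gives $d(w_0, w) \leq \ell w_s + (|w|-\ell) w_d$ for every $w \in L$. Feeding this into the Lipschitz bound and writing $\delta_\phi(w) := |\phi(w)| - |w|$, rearrangement produces
\[
\delta_\phi(w) - \delta_\phi(w_0) \leq \ell \, w_s / w_d,
\]
so the upper oscillation of $\delta_\phi$ over $L$ is controlled by the single value $\delta_\phi(w_0)$ plus an $L$-dependent constant.

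The main obstacle is then the uniform bound on $\delta_\phi(w_0)$, i.e., showing that $|\phi(w_0)|$ stays bounded over \emph{all} isometries of $L$. Here the plan is to use that $\phi(w_0)$ must share every metric invariant of $w_0$: the ball-growth function $r \mapsto |B_r(w_0)\cap L|$, and more finely, the multiset of distances from $w_0$ to all other words of $L$. Because $w_0$ is extremal and the condition $w_s < 2 w_d$ pins down the local structure of balls of small radius (within such a ball, substitutions and short insertion/deletion chains dictate membership), I expect these invariants to force $\phi(w_0)$ into a finite subset of $L$, yielding $|\phi(w_0)| \leq C(L)$ independently of $\phi$.

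Finally, applying the same argument to $\phi^{-1}$ bounds $\delta_\phi(w)$ from below by a symmetric constant. Combining the two, one obtains an $L$-dependent constant $m$ with $||\phi(w)|-|w|| \leq m$ for every $w \in L$ and every isometry $\phi$. The algebraic content of the theorem lives in the first two paragraphs; the genuine difficulty is the orbit-finiteness statement for the shortest word in the third.
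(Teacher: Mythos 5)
Your first two paragraphs are a correct and clean reduction: the Lipschitz bound $||\phi(u)|-|\phi(v)||\leq d(u,v)/w_d$ together with the one--strategy upper bound $d(w_0,w)\leq \ell w_s+(|w|-\ell)w_d$ does show that $\delta_\phi$ is controlled, up to an $L$-dependent constant, by the single value $\delta_\phi(w_0)$, and the $\phi^{-1}$ trick then gives the two-sided bound. (A small quibble: that upper bound holds for \emph{any} weights, since $d$ is a minimum over strategies; the hypothesis $w_s<2w_d$ plays no role there.) This reduction differs from the paper's, which works not with the shortest word but with the finite set $M(L)$ of words minimal under the subsequence order, and uses the equality case of $d(u,v)\geq w_d\,||u|-|v||$ (attained exactly when one word is a subsequence of the other).

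The genuine gap is your third paragraph: the claim that metric invariants of $w_0$ (ball growth, the distance multiset) force $\phi(w_0)$ into a finite subset of $L$ is exactly the hard content of the theorem, and you only state that you ``expect'' it. As stated, the invariant argument cannot work: for $w_s=2w_d$ the paper exhibits $L=1^*\cup 0^*$, which is isometric to $(\mathbb{Z},|x-y|)$; there every word, including the shortest one, has the same ball-growth function and the same distance multiset, and the isometry group $D_\infty$ moves the shortest word through an infinite orbit. So any proof of orbit-finiteness must use $w_s<2w_d$ in an essential quantitative way, not merely to ``pin down the local structure of balls.'' The paper does this by combining Higman's lemma (no infinite antichain for the subsequence order, hence $M(L)$ is finite) with a three-word inequality: inside an infinite orbit one extracts nested infinite sublanguages with unique minimal words $w_0,w_1,w_2$ whose lengths grow by factors depending on $2/(2-\theta)$, and transitivity then violates the triangle equality $d(w_0,w_1)+d(w_1,w_2)=d(w_0,w_2)$. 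Some argument of this strength is what your proposal is missing.
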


Note that for generalized Levenshtein distancess with a replacement weight greater than or equal to twice the insertion weight, this statement is not true.

It follows from Theorem 1, in particular, that the isometry group of any formal language with respect to the Levenshtein metric embeds in $\Pi_{n=1}^\infty S_{n}$.

A number of examples are also constructed, demonstrating that this estimate is, in a certain sense, unimprovable, moreover, for languages with different growths.

\begin{definition}
\textbf{Growth} of $L$ is a function of $n \mapsto |\{w \in L| |w| \leq n\}|$
\end{definition}

\begin{theorem}
Let $G$ be an arbitrary finite group, $d$ an arbitrary generalized Levenshtein distance. Then there are $n \in \mathbb{N}$ and language $L \subset \{0; 1\}^{24n}$ such that $|L| = n$, $Isom_d(L) \cong G$ and $d(u, v) \in \{4, 6\}$ for any two distinct words $u, v \in L$.
\end{theorem}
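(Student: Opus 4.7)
My strategy is a two-step reduction: first realize $G$ as a graph automorphism group via Frucht's theorem, then encode the resulting graph as a language of binary words with Levenshtein distances concentrated at two values.

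Concretely, by Frucht's theorem every finite group $G$ is isomorphic to the automorphism group of some finite simple graph $\Gamma$; take such a $\Gamma$ and let $n := |V(\Gamma)|$, which will be the $n$ in the theorem statement. For each vertex $v \in V(\Gamma)$ I would build a word $w_v \in \{0,1\}^{24n}$ so that $d(w_u, w_v) = 4$ when $\{u, v\}$ is an edge of $\Gamma$ and $d(w_u, w_v) = 6$ otherwise. The natural implementation is a block construction: partition each word into $n$ blocks of length $24$, with the $i$-th block of $w_v$ being a codeword drawn from a small fixed palette determined by the relationship between $v$ and $i$ in $\Gamma$. The palette is chosen so that the codewords are ``isolated'' in the Levenshtein metric --- any optimal edit alignment between two of the words must respect block boundaries, so the total distance decomposes as the sum of block-wise distances --- and so that this sum takes exactly the two values $4$ and $6$ depending on adjacency. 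The block length $24$ is used to provide enough separation for this to hold under an arbitrary generalized Levenshtein weight assignment. Setting $L := \{w_v : v \in V(\Gamma)\}$, any isometry $\phi \in Isom_d(L)$ preserves the partition of pairs by distance and hence induces a bijection of $V(\Gamma)$ preserving edges, giving an embedding $Isom_d(L) \hookrightarrow Aut(\Gamma)$; the reverse inclusion holds because the construction is symmetric, so each $\sigma \in Aut(\Gamma)$ lifts to the isometry $w_v \mapsto w_{\sigma(v)}$. This yields $Isom_d(L) \cong Aut(\Gamma) \cong G$.

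The main obstacle is the word construction itself. In the naive ``self/adjacent/non-adjacent'' block scheme, the block at index $i \notin \{u, v\}$ contributes a nonzero amount to $d(w_u, w_v)$ precisely when $u$ and $v$ disagree about adjacency to $i$, so the total distance reflects the symmetric difference $|N(u) \triangle N(v)|$ and generically depends on the full graph structure, not just on whether $u \sim v$. To force the distance into a two-element set the scheme must be refined so that the non-marker blocks contribute a constant amount --- for instance by identifying the ``adjacent'' and ``non-adjacent'' codewords and encoding the adjacency bit only through a two-variant choice at the two ``self'' blocks (at positions $u$ and $v$), calibrated so that the marker-block contributions combine to give exactly $4$ (edge) or $6$ (non-edge). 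Carrying out this calibration while simultaneously avoiding any accidental symmetries that would enlarge $Isom_d(L)$ beyond $G$ is the delicate combinatorial step, and is presumably what accounts for the specific block length $24$ and the precise distance values $\{4, 6\}$.
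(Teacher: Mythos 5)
Your high-level strategy (Frucht's theorem, then a two-valued binary encoding whose isometries are exactly the graph automorphisms) coincides with the paper's, but the construction that actually forces the distance into $\{4,6\}$ is missing, and the repair you sketch cannot work. You correctly diagnose that a ``one block per vertex'' encoding makes $d(w_u,w_v)$ depend on $|N(u)\triangle N(v)|$; however, your proposed fix --- making all non-self blocks identical and ``encoding the adjacency bit only through a two-variant choice at the two self blocks'' --- is impossible in principle: the variant placed at block $v$ of the word $w_v$ is a function of the single vertex $v$, whereas adjacency is a function of the \emph{pair} $(u,v)$, so no per-word choice at fixed positions can encode it. The paper's resolution is to index coordinates by \emph{edges} rather than vertices: take $\Gamma$ \emph{cubic} (Frucht's theorem holds in the $3$-regular version, which the paper quotes as Theorem 7) and let $w_v$ be the corresponding row of the vertex--edge incidence matrix. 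Then $h(w_u,w_v)=\deg u+\deg v-2\,|\{e:\ u,v\in e\}|=6-2[u\sim v]\in\{4,6\}$ automatically, with no dependence on the rest of the graph; regularity kills the $|N(u)\triangle N(v)|$ problem at the root.

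The second missing ingredient is the passage from Hamming distance to an arbitrary generalized Levenshtein distance, which you only gesture at (``block length $24$ is used to provide enough separation''). The paper does this with the explicit stretching operation of Section 4: each character $a$ of the incidence vector is replaced by $1^7 0 1^7 a$, and the stretching lemma shows that when the Hamming distance is below the parameter $k=7$, an optimal edit path between the stretched words uses no insertions or deletions, so the generalized Levenshtein distance equals the Hamming distance of the originals. This is also where the constant $24$ really comes from: the stretched words have length $16|E|=16\cdot\tfrac{3|V|}{2}=24|V|$, i.e.\ one block of length $16$ per \emph{edge}, not one block of length $24$ per vertex. Once the distance is two-valued and encodes adjacency, your final step ($Isom_d(L)\cong Aut(\Gamma)\cong G$) is correct and matches the paper.
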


Here and below, $A^k$ denotes the set of all words of length $k$ over the alphabet $A$.

\begin{theorem}
Let $G_1, G_2, ... $ be a countable sequence of arbitrary finite groups, $d$ be an generalized Levenshtein distance. Then there exists a language $L \subset \{0; 1\}^*$ with growth $O(n)$ such that $Isom_d(L) \cong \Pi_{n=1}^\infty G_n$.
\end{theorem}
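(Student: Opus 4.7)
The strategy is to build $L$ as a disjoint union of well-separated copies of the Theorem~2 constructions, one per $G_i$. For each $i$, apply Theorem~2 to $G_i$ to obtain $L_i^{(0)} \subset \{0,1\}^{24 n_i}$ with $|L_i^{(0)}| = n_i$, $\mathrm{Isom}_d(L_i^{(0)}) \cong G_i$, and pairwise in-block distances in $\{4,6\}$. By inspecting the construction and, if necessary, padding it with additional redundant coordinates, arrange for the sizes $n_i$ to be strictly increasing, so that distinct blocks will be distinguishable by cardinality. Then fix a geometrically growing sequence $\ell_1 < \ell_2 < \cdots$ of word lengths, with gaps $\ell_{i+1} - \ell_i$ large enough that the cost under $d$ of inserting $\ell_{i+1}-\ell_i$ symbols strictly exceeds $6$, and set
\[
L_i := 0^{\ell_i - 24 n_i}\cdot L_i^{(0)}, \qquad L := \bigcup_{i \geq 1} L_i.
\]
The geometric growth of $\ell_i$ ensures that $|\{w \in L : |w| \leq n\}| = \sum_{i:\,\ell_i \leq n} n_i = O(n)$, yielding the required growth bound.

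The easy half of the argument then runs in a few quick steps. Prepending a common prefix preserves Levenshtein distance, so in-block distances remain in $\{4,6\}$, while for $u \in L_i$, $v \in L_j$ with $i < j$ the distance $d(u,v)$ is at least the cost of inserting $\ell_j - \ell_i$ symbols, which by choice exceeds $6$. Consequently the relation "$d(\cdot,\cdot) \leq 6$" partitions $L$ exactly into the $L_i$; any isometry $\phi$ permutes these classes, and because the cardinalities $n_i$ are distinct and $\phi$ is a bijection of $L$, this permutation is forced to be the identity. The restriction $\phi|_{L_i}$ is then an isometry of $L_i$, and padding-invariance gives $\mathrm{Isom}_d(L_i) \cong \mathrm{Isom}_d(L_i^{(0)}) \cong G_i$. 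This produces an injection $\mathrm{Isom}_d(L) \hookrightarrow \prod_i G_i$.

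The main obstacle is the converse: to establish surjectivity, every tuple $(g_i)_i \in \prod_i G_i$ must assemble into a genuine isometry of $L$, which amounts to the cross-block invariance
\[
d(g_i(u), g_j(v)) = d(u, v) \qquad \text{for } u\in L_i,\ v\in L_j,\ i\neq j.
\]
The plan for this is to choose the spacing $\ell_j - \ell_i$ so large (compared to $24 n_j$) that the optimal Levenshtein alignment between $u = 0^{p_i}\tilde u$ and $v = 0^{p_j}\tilde v$ is forced to pair zero-padding against zero-padding and data $\tilde u$ against data $\tilde v$, splitting $d(u,v)$ additively into a length-gap contribution plus $d(\tilde u,\tilde v)$. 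The required identity then reduces to $d(g_i(\tilde u), g_j(\tilde v)) = d(\tilde u,\tilde v)$, which is not automatic from the intrinsic isometry property of each block alone and has to be extracted from the explicit structure of Theorem~2's construction --- specifically, from the fact that the $G_i$-action on $L_i^{(0)}$ is realized by coordinate-level manipulations that preserve Levenshtein distance to arbitrary external binary strings of comparable length, not merely to elements of $L_i^{(0)}$ itself. This external equivariance is what ultimately identifies $\mathrm{Isom}_d(L)$ with the full direct product $\prod_{n=1}^\infty G_n$.
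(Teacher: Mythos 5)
Your overall architecture --- one Theorem 2 block per $G_i$, placed at well-separated word lengths, with growth controlled by the spacing --- matches the paper's. But there is a genuine gap precisely where you flag ``the main obstacle'': the cross-block identity $d(g_i(u),g_j(v))=d(u,v)$ is never established, and the route you sketch for it does not work. With zero-padding $0^{\ell_i-24n_i}\tilde u$, the distance between words in different blocks is \emph{not} a function of their lengths alone: the shorter word is generally not a subsequence of the longer one (its $1$'s must be matched against $1$'s of $\tilde v$), so $d(u,v)$ exceeds $\ell_j-\ell_i$ by a content-dependent amount. The ``external equivariance'' you then invoke --- that the $G_i$-action preserves Levenshtein distance to arbitrary external binary strings --- is not a consequence of being an isometry of the finite block (an isometry of a finite language is merely a distance-preserving permutation of its words and carries no information about distances to outside words), it is not proved, and it is not plausibly true for the Theorem 2 construction, whose isometries permute coordinate blocks of the stretched incidence vectors. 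As written, surjectivity onto $\prod_{n}G_n$ is asserted, not proved.

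The paper closes exactly this gap by choosing a different padding: $L'_{n+1}=(01)^{l_n+7}L_{n+1}$, where $l_n$ is the common length of words in $L'_n$. Since every binary word of length $l_n$ is a subsequence of $(01)^{l_n}$, every word of $L'_n$ is a subsequence of every word of $L'_m$ for $m>n$, so the cross-block distance equals the length difference $l_m-l_n$ exactly --- independent of content. Any choice of block isometries therefore assembles automatically into an isometry of the union, with no equivariance needed. Two secondary points: (i) you propose to make the cardinalities $n_i$ strictly increasing ``by padding with additional redundant coordinates,'' but adding coordinates changes word length, not the number of words; to vary $n_i$ one must enlarge the cubic graph in Frucht's theorem. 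The paper avoids this entirely by showing that length is an isometry invariant ($\Lambda$ is fixed as the unique word with no neighbour at distance $4$ or $6$, and the distance to $\Lambda$ is the length), so each block maps to itself without any cardinality argument. (ii) Your growth estimate is fine once $n_i=O(\ell_i)$ and the $\ell_i$ grow geometrically, and is the same in spirit as the paper's bound $|\{w\in L:|w|\leq k\}|\leq 1+k/24$.
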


\begin{theorem}
 Let $d$ be an arbitrary generalized Levenshtein distance. Then for any integer $k \geq 2$, there exists a language $L$ over an alphabet of $k$ characters with growth $\Theta(k^{\sqrt{n}})$ such that $Isom_d(L) \cong S_k^\infty \times \Pi_{n=1}^\infty S_{k^n}$.
\end{theorem}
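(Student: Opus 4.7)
The plan is to construct $L$ as a disjoint union of two families of equidistant clusters placed at well-separated word lengths: a family of \emph{large clusters} realizing the $\prod_{n=1}^\infty S_{k^n}$ factor, and a family of \emph{small clusters} realizing the $S_k^\infty$ factor and simultaneously accounting for the bulk of the growth.

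For each $n \geq 1$, I would place a large cluster $B_n$ consisting of $k^n$ pairwise equidistant words at a length $L_n = \Theta(k^n)$---the minimum permitted by Plotkin-type bounds for equidistant codes of that size---obtained for instance from a simplex code over $\mathbb{F}_k$ padded to the required common length. Independently, at a countable collection of other lengths $M_j$, I would place small clusters $C_j$ of $k$ pairwise equidistant words (for example $\{0^{M_j}, 1^{M_j}, \dots, (k-1)^{M_j}\}$, equipped with distinguishing ``tags'' when several small clusters must coexist at nearby lengths). The sequences $L_n$ and $M_j$ are chosen with gaps strictly exceeding $2m$, where $m$ is the constant from Theorem~1 applied to $L$; and the small clusters are distributed so that $|\{w \in L : |w| \leq N\}| = \Theta(k^{\sqrt{N}})$, which forces roughly $\Theta(k^{\sqrt{N}-1})$ small clusters below length $N$ (note that the large clusters alone only contribute linear growth, so the small clusters must dominate).

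Every permutation within a single cluster, extended by the identity elsewhere, is plainly an isometry, which yields the inclusion $\prod_{n=1}^\infty S_{k^n} \times S_k^\infty \hookrightarrow \mathrm{Isom}_d(L)$. Conversely, by Theorem~1 any isometry $\phi$ distorts length by at most $m$, so the gap condition forces $\phi$ to send each cluster into one of nearly the same length; rigidity of cluster shapes (pairwise distinct signatures: differing sizes, tags, and internal distances) then forces $\phi$ to map every cluster to itself. Inside each cluster $\phi$ acts as an arbitrary permutation preserving the common internal distance, and these choices are independent across clusters, recovering exactly the claimed group.

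The main obstacle is the simultaneous construction of $\Theta(k^{\sqrt{N}})$ pairwise non-isomorphic small clusters below each length $N$: one must exhibit enough distinguishing tags so that no two small clusters are swappable by any global isometry, while still preserving the internal equidistance of each cluster. This is a combinatorial bookkeeping argument exploiting the exponential abundance of words at each length, but it is essentially routine once the blocks-with-tags framework is in place; the genuinely novel ingredient is the Plotkin-driven choice of $L_n = \Theta(k^n)$, which balances the cost of the large clusters against the growth budget $k^{\sqrt{N}}$.
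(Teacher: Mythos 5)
Your architecture diverges fundamentally from the paper's, and the divergence opens a gap at exactly the point you dismiss as ``routine bookkeeping.'' The paper does not manufacture the $S_k^\infty$ factor from separate small clusters of $k$ equidistant words: each block of its language is a \emph{stretched copy of the full Hamming cube} $A^{k^n}$ (via the word-stretching lemma of Section~4, which converts Hamming distance into $lev_\theta$), preceded by a prefix $(a_1\cdots a_k)^{l(k,n-1)}$ that forces the distance between words of different blocks to equal their length difference. A single such block contributes $k^{k^n}$ words of length $\Theta(k^{2n})$ --- which is the entire source of the growth $\Theta(k^{\sqrt n})$ --- and its isometry group is the known group $S_k^{k^n}\times S_{k^n}$ of the Hamming cube \cite{mrk}; the $S_k^\infty$ factor arises from coordinatewise alphabet permutations inside these cubes, not from extra clusters.

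Your version instead asks the small clusters to carry the growth: $\Theta(k^{\sqrt N})$ words below length $N$, in clusters of $k$ pairwise equidistant words, each contributing an \emph{independent} $S_k$ factor. For a permutation inside one cluster (fixing everything else) to be an isometry, every word outside that cluster must be equidistant from all $k$ of its members; for the group to be no larger, no isometry may swap clusters. These are severe packing constraints, not bookkeeping. Your own separation condition (lengths differing by more than $2m$) admits only $O(N)$ cluster locations below length $N$, so almost all of the $\Theta(k^{\sqrt N})$ clusters must share lengths; and already for $k=2$ the compatibility conditions force the clusters at a common length into the form $\{c_i,\, c_i+e_i\}$ with every difference $c_i+c_j$ meeting $\mathrm{supp}(e_i)$ and $\mathrm{supp}(e_j)$ in exactly half their positions, whereupon Plotkin-type counting caps the number of mutually compatible clusters at a polynomial in the length --- far short of $k^{\sqrt N}$. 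This is the central obstruction, and it is precisely what the hypercube-plus-stretching construction circumvents by packing exponentially many words into one block with a completely understood isometry group. Separately, even for your large clusters you assert that within-cluster permutations are ``plainly'' isometries of all of $L$; length separation alone does not give the required equidistance of each external word to all members of a cluster, and the paper needs an explicit subsequence/prefix argument (``delete everything except the prefix, then correct one character per block'') to establish it.
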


The last fact is interesting because the isometry group obtained by Theorem 1 is maximal for all languages.

Also, special attention is paid to regular languages:

\begin{definition}
\textbf{Regular language} is a formal language that can be obtained from finite languages by applying a finite number of union operations ($U \cup V$), product ($UV = \{uv| u \in U, v \in V \}$) and Kleene stars ($L^* = \bigcup_{n = 0}^\infty L^n$).
\end{definition}

It is proved in the article that their isometry groups are also sufficiently diverse and the given bound for them is unimprovable:

\begin{theorem}
Let $G$ and $H$ be arbitrary finite groups, $d$ be an arbitrary generalized Levenshtein distance. Then there is a regular language $L \subset \{0; 1\}^*$ such that $Isom_d(L) \cong G \times H^\mathbb{N}$.
\end{theorem}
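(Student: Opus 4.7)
The strategy is to combine realizations of $G$ and $H$ from Theorem 2 into a regular language whose isometry group splits as $G \times H^{\mathbb{N}}$. Applying Theorem 2, let $L_G \subset \{0, 1\}^{24|G|}$ and $L_H \subset \{0, 1\}^{24|H|}$ be finite languages with $Isom_d(L_G) \cong G$, $Isom_d(L_H) \cong H$, and all pairwise intra-language distances in $\{4, 6\}$. A first candidate is
\[
L \;=\; L_G \cdot (s \cdot L_H)^*,
\]
where $s$ is a separator of sufficiently large length $N$; this $L$ is regular by closure of regular languages under concatenation and Kleene star, and each word of $L$ admits a unique decomposition $g \cdot s \cdot h_1 \cdots s \cdot h_n$.

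The plan proceeds in three steps. First (\emph{length rigidity}), when $|H| \geq 2$, the number of words of length $24|G| + n(N + 24|H|)$ is $|G| \cdot |H|^n$, a strictly increasing function of $n$; since isometries are bijections, they must preserve the block count $n$. (The trivial case $|H| = 1$, where $H^{\mathbb{N}}$ collapses to $\{e\}$ and $G \times H^{\mathbb{N}} \cong G$, is handled separately.) Second (\emph{distance decomposition}), for $N$ large compared to the intra-block distances, the optimal edit path between any two words of $L$ aligns block-by-block when block counts match and aligns the shorter block sequence to a monotone subsequence of the longer when counts differ, yielding $d(w, w') = d_G(g, g') + \sum_k d_H(h_k, h_k')$ in the same-$n$ case and an analogous minimum-over-subsequences expression otherwise. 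Third (\emph{identification}), distance preservation combined with the block decomposition forces any \emph{diagonal} isometry (acting block-by-block) to be determined by some $\tau \in G$ on $g$ and some $\sigma_k \in H$ on the $k$-th $L_H$-block, with cross-length constraints pinning each $\sigma_k$ to be identical across all words sharing a $k$-th block; conversely every such choice defines an isometry, giving the inclusion $G \times H^{\mathbb{N}} \hookrightarrow Isom_d(L)$.

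The main obstacle is the reverse inclusion. In the naive construction, \emph{non-diagonal} isometries like coherent block-reversal $g s h_1 \cdots s h_n \mapsto g s h_n \cdots s h_1$, and related block permutations, also preserve the symmetric minimum-over-subsequences distance formula, enlarging $Isom_d(L)$ beyond $G \times H^{\mathbb{N}}$. The fix is to modify the regular construction so as to break the block-level symmetry: one natural option is to replace $(s L_H)^*$ by an asymmetric periodic pattern such as $(s_1 L_H s_2 L_H)^*$ with $s_1 \neq s_2$, or to attach a distinguished end-marker anchoring the orientation of the word. After this modification, a delicate cross-length case analysis over all possible edit alignments is required to verify that no non-diagonal block permutation survives as an isometry, completing the identification $Isom_d(L) \cong G \times H^{\mathbb{N}}$; this final verification is the most technical part of the argument.
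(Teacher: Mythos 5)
Your construction is genuinely different from the paper's, and the difference is where the proof breaks: the forward inclusion $G \times H^{\mathbb{N}} \hookrightarrow Isom_d(L)$ for $L = L_G(sL_H)^*$, which you dismiss with ``conversely every such choice defines an isometry,'' is exactly the step that fails. The obstruction is not block reversal (the only obstacle you flag) but the cross-block-count distances governed by your own minimum-over-subsequences formula. Recall that $d(u,v)$ equals the deletion weight times $\bigl||u|-|v|\bigr|$ precisely when the shorter word is a subsequence of the longer, and is strictly larger otherwise. Take $h \neq h'$ in $L_H$ and compare $u = g\,s\,h$ with $v = g\,s\,h'\,s\,h$: here $u$ is a subsequence of $v$ (align $u$'s block with the \emph{second} block of $v$), so $d(u,v)$ is the bare length difference. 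The image under a coordinatewise map $(\tau,\sigma_1,\sigma_2,\dots)$ is the pair $\tau(g)\,s\,\sigma_1(h)$ and $\tau(g)\,s\,\sigma_1(h')\,s\,\sigma_2(h)$, and whether the first is a subsequence of the second now depends on the interaction of $\sigma_1(h)$ with $\sigma_1(h')\,s\,\sigma_2(h)$, i.e.\ on the contents and on the choice of separator. If $s$ is ``rich'' (contains a long alternating factor), every $L_H$-block embeds into $s$ alone, all cross-block-count distances degenerate to pure length differences, the strata become metrically independent, and $Isom_d(L)$ blows up to a product over $n$ of the full isometry groups of the strata --- in particular each stratum carries its own independent copy of $G$ and admits block permutations, so the group is far larger than $G \times H^{\mathbb{N}}$. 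If $s$ is ``poor,'' the subsequence relation between strata is content-sensitive, and preserving it forces relations among the $\sigma_k$ (in the cleanest cases $\sigma_1(h)=\sigma_2(h)$ for all $h$, collapsing your $H^{\mathbb{N}}$ to a single diagonal copy of $H$). In neither regime do you get $G\times H^{\mathbb{N}}$, and no end-marker or asymmetric separator repairs this, because the problem lives in the cross-length distances rather than in orientation.

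The paper avoids the issue by never placing two independent $H$-blocks inside one word. It realizes $H^{\mathbb{N}}$ as $Isom_\theta(L_2((01)^m)^*)$, where each word is a \emph{single} $L_2$-block followed by padding $(01)^{mk}$; the $k$-th copy of $H$ acts on the stratum with padding exponent $k$. The key computation is that $lev_\theta(u(01)^{pm}, v(01)^{qm}) = 2m|p-q|$ for $p \neq q$ \emph{regardless of $u$ and $v$}, because the shorter word is always a subsequence of the longer (any word of length $m$ embeds in $(01)^m$). Since cross-stratum distances depend only on length, the strata-wise $H$-actions are automatically independent isometries, and a neighbor-counting argument rules out anything else. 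The factor $G$ is then attached by a disjoint union $L_1 \cup (01)^{n+m}L_2((01)^m)^*$ rather than by concatenation, with the two parts separated by a distance computation that again depends only on lengths. To salvage your approach you would need to redesign $L$ so that distances between words with different block counts are provably independent of block contents while the same-count strata still only admit the intended isometries --- which is essentially what the paper's one-block-plus-padding construction accomplishes.
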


\begin{theorem}
Let $d$ be an arbitrary distance from the family of generalized Levenshtein distances. Then there exists a regular language $L$ such that $Isom_d(L) \cong \Pi_{n=1}^\infty S_{2n}$.
\end{theorem}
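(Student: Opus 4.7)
The plan is to construct an explicit regular language $L$ and verify $\mathrm{Isom}_{d}(L) \cong \prod_{n=1}^{\infty} S_{2n}$ directly. By Theorem~1 any isometry group embeds in $\prod_{n} S_{n}$; what we want is the specific subgroup in which the factor at position $n$ is $S_{2n}$, with one such factor for each $n$. The natural way to realize this subgroup is to decompose $L = \bigsqcup_{n \geq 1} L_{n}$ into ``levels'', each $L_{n}$ consisting of $2n$ equidistant words, in such a way that an arbitrary permutation of any single $L_{n}$ extends to a global isometry of $L$. Formally, I want $L$ to enjoy (i) \emph{intra-level equidistance}: every pair in $L_{n}$ is at the same distance $D = \min(2a, b_{1}+b_{2})$; (ii) \emph{level separation}: the $|L_{n}| = 2n$ are pairwise distinct and inter-level distances strictly exceed $D$, so isometries permute each $L_{n}$ setwise; and (iii) \emph{cross-level uniformity}: $d(u_{1}, v) = d(u_{2}, v)$ for any $u_{1}, u_{2} \in L_{n}$ and any $v \in L_{n'}$.

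A ready-made equidistant cluster for~(i) is the ``moving $1$'' family $L_{n} = \{0^{k-1} 1 0^{N_{n}-k} : 1 \leq k \leq 2n\}$: the homogeneous zero-padding allows a single delete-then-insert of the sole $1$-symbol to realize the distance $\min(2a, b_{1}+b_{2}) = D$ between any two cluster elements, independently of $|k-k'|$. The base $0^{\ast} 1 0^{\ast}$ is regular, and after intersecting with a regular length filter one controls the lengths $N_{n}$; by choosing a scaffold of the form $(0^{M})^{\ast} 1 (0^{M})^{\ast}$ with $M$ large in terms of the weights $a, b_{1}, b_{2}$, the length difference between adjacent levels costs at least $2M\cdot\min(b_{1}, b_{2}) > D$, giving~(ii), while the occurring lengths remain in an arithmetic progression so that the language stays regular.

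The actual obstacle is~(iii). In the bare construction the distance $d(0^{k-1} 1 0^{N_{n}-k}, 0^{k'-1} 1 0^{N_{n'}-k'})$ is computed by aligning the two $1$'s and depends on $k-k'$, not merely on $n-n'$, so permuting the $1$-positions inside $L_{n}$ changes distances to elements of $L_{n'}$ and the resulting isometry group falls strictly below $\prod_{n} S_{2n}$. I would cure this by embedding the cluster inside a long symmetric scaffold of the form
\[
L \;=\; (AB)^{\ast}\cdot C\cdot (BA)^{\ast}
\]
over a small (possibly enlarged) alphabet, with $A, B$ fixed short blocks and $C$ a finite cluster core, designed so that any cheap edit between words of different levels must first align scaffold-to-scaffold; the remaining ``center'' cost then depends only on $(n, n')$ and not on which element of $C$ was placed at either junction. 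The hypothesis $a < 2 b_{2}$ of Theorem~1 is precisely what rules out the alternative cheap substitution-based edits that would otherwise reintroduce dependence on the cluster indices. Granted (i)-(iii), the identification $\mathrm{Isom}_{d}(L) \cong \prod_{n} S_{2n}$ follows formally: by (ii) isometries decompose through $\prod_{n} \mathrm{Sym}(L_{n})$; by (i) and equidistance each factor is enlarged to $S_{2n}$; and by (iii) together with the distinct cardinalities the factors act independently with no cross-level mixing. The heart of the argument is the verification of~(iii) through a finite case analysis of optimal edit alignments, which is the main technical step.
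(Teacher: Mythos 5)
Your high-level skeleton --- levels $L_n$ of $2n$ pairwise-equidistant words, level separation, cross-level uniformity, and a neighbour-counting argument to force each isometry to preserve levels --- is exactly the shape of the paper's argument, and the group-theoretic endgame is fine. But there is a genuine gap at precisely the point you yourself flag as ``the main technical step'': you never produce the language. The bare moving-$1$ family fails condition (iii), as you observe, and the proposed repair --- a scaffold $(AB)^{*}C(BA)^{*}$ with unspecified blocks $A$, $B$, $C$ and an unperformed ``finite case analysis of optimal edit alignments'' --- is a promissory note, not a construction. Moreover it is not clear that your intended levels are simultaneously realizable by a \emph{regular} language: if the single marked symbol may occupy only the first $2n$ positions of a word whose length $N_n$ grows faster than $2n$, then the constraint ``position of the mark is bounded by a proper fraction of the length'' is typically not recognizable by a finite automaton. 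The paper resolves both difficulties at once with the concrete language $L=(010)^{*}(110)(010)^{*}\cap(\{0,1\}^{6})^{*}$: the period-$3$ background makes every word of length $6(i-1)$ in $L$ a subsequence of every word of length $6i$ in $L$ (checked by an explicit schedule of $6$ deletions), so the distance between any two words of $L$ depends only on their lengths; and the count of $2i$ words at length $6i$ falls out of the block structure rather than from an artificial cap, so regularity is immediate. That subsequence verification is the content you are missing.

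A second, smaller error: you invoke ``the hypothesis $a<2b_{2}$ of Theorem 1'' to rule out cheap substitution-based edits, but Theorem 6 is asserted for an \emph{arbitrary} generalized Levenshtein distance; after the normalization of Proposition 2 one only has $\theta\in(0;2]$, and the boundary case $\theta=2$ (pure insertion/deletion, excluded from Theorem 1) must still be covered. Your argument should not lean on that strict inequality. In the paper's construction it does not matter: for every $\theta\in(0;2]$ the same-length distance is the constant $\min(2\theta,2)$ and the different-length distance equals the length difference, because the shorter word embeds as a subsequence of the longer one.
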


The last fact is interesting because the isometry group obtained by Theorem 1 is maximal for all languages, not necessarily regular ones.

Here and below, $S_n$ denotes a symmetric group on $n$ elements, $C_n$ denotes a cyclic group of order $n$, $\Pi$ denotes the Cartesian product of groups.

The isometry groups of finite languages have been studied before.

For example, in \cite{mrk} it is proved that for an arbitrary finite alphabet $A$, the isometry group $A^n$ with respect to the Hamming distance is isomorphic to $S_{|A|}^n \times S_n$.

Another paper on a similar topic is \cite{ruthladser}, where it is proved that for an arbitrary finite alphabet $A$ and $2 \leq k_1 < k_2$ the isometry group of the language $\bigcup_{k = k_1}^{k_2} A^k$ with respect to the <<internal Levenshtein distance>> (the minimum number of operations of insertions, deletions and replacements of characters that transform one word into another in such a way that all <<intermediate words>> lie in the source language) is isomorphic to $S_{|A|}^n \times C_2$

In the same article, the isometry groups of languages, including infinite ones, are studied with respect to the family of generalized Levenshtein distances. Their definitions and main properties will be given in Section 1.

The work consists of 9 sections (including introduction):
\begin{itemize}
\item Section 2 formulates the definition of a family of generalized Levenshtein metrics, and also classifies isometry groups of one-character languages.
\item Section 3 proves Theorem 1 and constructs a counterexample for the case when the weight of the replacement is greater than or equal to twice the weight of the insertion.
\item In Section 4, we carry out the preparatory work necessary for the proof of Theorem 2.
\item Section 5 proves Theorem 2.
\item Section 6 proves Theorem 3.
\item Section 7 provides a proof
\item Section 7 proves Theorem 4.
\item Section 8 proves Theorem 5.
\item Section 9 proves Theorem 6.
\end{itemize}

\section{Generalized Levenshtein distances}

\begin{definition}
Let $A$ be a finite alphabet and the words $u, v \in A^*$.
Then \textbf{generalized Levenshtein distance} with insertion weight $\gamma$ and replacement weight $\theta$ between $v$ and $u$ is $$lev_{\gamma, \theta}(u, v) = \min \{ \gamma n + \theta m | u \text{ can be translated into } v \text{ by n insertions or deletions and m replacements} \}$$
\end{definition}

Obviously, for $\gamma, \theta > 0$ $lev_{\gamma, \theta}$ is a distance on $A^*$.

The most studied special cases of the generalized Levenshtein distance are:
\begin{itemize}
    \item[$lev_{1, 1}$] classical Levenshtein metric -- the minimum number of insertions, deletions, or substitutions required to tranform one word into another (first discussed in \cite{levenshtein})
    \item[$lev_{1, 2}$] is the minimum number of insertions or deletions required to transform one word into another.
    \item [$lev_{n, 1}$] for $n \to \infty$ converges pointwise to the Hamming metric, the minimum number of substitution operations required to transform one word into another.
\end{itemize}

Also, there is an alternative way to specify the generalized Levenshtein distance by the recursive formula:

\begin{proposition}[\cite{wagfish}]
$$lev_{\gamma, \theta}(a, b) = \begin{cases} \gamma |a| & \quad |b| = 0 \\ \gamma |b| & \quad |a| = 0 \\ lev_{\gamma, \theta}(a.tail, b.tail) & \quad a.head = b.head \\ \min(\theta + lev_{\gamma, \theta}(a. tail, b.tail), \gamma + lev_{\gamma, \theta}(a.tail, b), \gamma + lev_{\gamma, \theta}(b.tail, a)) & \quad a. head \neq b.head \end{cases}$$

where $a.tail$ is the suffix of $a$ containing all of its characters except the first one, and $a.head$ is the first element of $a$.
\end{proposition}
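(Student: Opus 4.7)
The plan is to prove the recursive identity by strong induction on $n = |a|+|b|$. The base cases are immediate from the definition: if $|a|=0$ then transforming the empty word into $b$ requires exactly $|b|$ insertions (no deletions or replacements are applicable to an empty word), yielding cost $\gamma|b|$, and the case $|b|=0$ is symmetric.

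For the inductive step I handle the two inequalities separately. The ``$\leq$'' direction is constructive: for each candidate on the right-hand side I exhibit an explicit edit sequence of matching cost. If $a.head = b.head$, then any optimal edit sequence transforming $a.tail$ into $b.tail$ already transforms $a$ into $b$ at the same cost, so $lev_{\gamma,\theta}(a,b) \leq lev_{\gamma,\theta}(a.tail, b.tail)$. If $a.head \neq b.head$, I may (i) replace $a.head$ by $b.head$ at cost $\theta$ and then optimally transform $a.tail$ into $b.tail$, (ii) delete $a.head$ at cost $\gamma$ and then optimally transform $a.tail$ into $b$, or (iii) insert $b.head$ in front of $a$ at cost $\gamma$ and then optimally transform $a$ into $b.tail$; the minimum of these three options yields the desired upper bound.

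For the ``$\geq$'' direction I take an optimal edit sequence $\sigma$ realizing $lev_{\gamma,\theta}(a,b)$ and analyze the fate of the leftmost position of $a$. After a commutation argument which lets me assume the operation affecting this position comes first, there are four cases: $a.head$ is preserved (which forces $a.head = b.head$ and leaves a residual subsequence transforming $a.tail$ into $b.tail$), $a.head$ is replaced (cost $\theta$, reducing to the pair $(a.tail, b.tail)$), $a.head$ is deleted (cost $\gamma$, reducing to $(a.tail, b)$), or a new character is first inserted before $a.head$ — in which case optimality forces the inserted character to equal $b.head$, since otherwise it would need to be undone later. In each case the inductive hypothesis bounds the residual cost by the corresponding term on the right. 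In the sub-case $a.head = b.head$ the triangle inequality $lev_{\gamma,\theta}(a.tail, b.tail) \leq \gamma + lev_{\gamma,\theta}(a.tail, b)$, together with its symmetric counterpart, absorbs the insertion and deletion candidates into $lev_{\gamma,\theta}(a.tail, b.tail)$, matching the form stated in the proposition.

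The main obstacle is the exchange argument underlying the ``$\geq$'' direction: formalizing that, in an optimal edit sequence, one may commute operations so that the first one acts at the leftmost position of $a$ (and, if it is an insertion, equals $b.head$) without increasing the total cost. Formally one models $\sigma$ as a sequence of elementary edits on intermediate words and argues that swapping an operation at position $1$ past an operation at a later position preserves both well-definedness and total cost. This is the standard but slightly technical core of the Wagner--Fischer argument; the remaining pieces are routine applications of the induction hypothesis.
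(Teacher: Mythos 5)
The paper offers no proof of this proposition at all: it is quoted directly from Wagner--Fischer (\cite{wagfish}) as a known result, so there is nothing to compare your argument against except the original literature. Your sketch is the standard and correct route: induction on $|a|+|b|$, the upper bound by exhibiting the three (respectively one) candidate edit sequences, and the lower bound by normalizing an optimal edit sequence so that its first operation acts on the leftmost position, then invoking the inductive hypothesis on the residue. You have also correctly identified where the real work lies, namely the exchange/commutation step; the clean way to discharge it is to pass from arbitrary edit sequences to \emph{traces} (non-crossing alignments), show every edit sequence costs at least as much as its induced trace, and then argue on traces that when $a.head=b.head$ the two heads may be aligned to each other without increasing cost. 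Two small points to tighten: (1) in the base case, the justification should be the counting argument that the number of insertions minus deletions equals $|b|-|a|$, forcing at least $|b|$ insertions --- intermediate words need not stay empty, so ``no operations are applicable to an empty word'' is not quite the right reason; (2) in the insertion sub-case of the lower bound, ``the inserted character must equal $b.head$'' is only immediate after normalization to a trace, since in a raw edit sequence an inserted symbol at position $1$ need not survive as the first letter of $b$. Neither issue affects the correctness of the plan.
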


In particular, from this formula, as well as the invariance of the Levenshtein distance under <<reflection>> of words, it is true that $lev_{\gamma, \theta}(uxv, uyv) = lev_{\gamma, \theta}(x, y )$.

The following inequalities also always hold:
\begin{itemize}
    \item $lev_{\gamma, \theta}(u, v) \leq (\theta - \gamma) \min(|u|, |v|) + \gamma \max(|u|, |v|) $
    \item $lev_{\gamma, \theta}(u, v) \geq \gamma ||u| - |v||$, and equality is achieved if and only if the shorter word is contained in the longer one as a subsequence.
\end{itemize}

\begin{definition}
Let $(M_1, d_1)$ and $(M_2, d_2)$ be metric spaces.
\newline
We will call a bijection $\phi:M_1 \to M_2$ \textbf{homothety} if there exists $t \in \mathbb{R}$ such that for any words $u, v \in \mathbb{L}$ $ d_2(\phi(u), \phi(v)) = t d_1(u, v)$.
\newline
We will call a bijection $\phi:M_1 \to M_2$ \textbf{isometry} if for any words $u, v \in \mathbb{L}$ $d_2(\phi(u), \phi(v)) = d_1(u, v)$.
\end{definition}

The set $Isom_d(M)$ of all isometries of the metric space $(M, d)$ into itself forms a composition group. Moreover, the isometry group of a metric space is always isomorphic to the isometry group of its image under homothety.

\begin{proposition}
Let $d$ be a generalized Levenshtein metric and $L$ an arbitrary language. Then there exists $\theta \in (0; 2]$ such that the trivial mapping $(L, d)$ to $(L, lev_{1, \theta})$ is a homothety.
\end{proposition}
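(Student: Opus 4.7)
The plan is to start by writing the given metric as $d = lev_{\gamma,\theta}$ for some $\gamma, \theta > 0$ (this is just unpacking the definition). Then the desired homothety constant will be $t = \gamma$, and the desired target weight will be $\theta' := \min(\theta/\gamma,\, 2) \in (0, 2]$.

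The first step is the scaling observation: from the definition, for any $u,v$,
\[
lev_{\gamma,\theta}(u,v) \;=\; \min\{\gamma n + \theta m\} \;=\; \gamma \cdot \min\bigl\{ n + (\theta/\gamma) m \bigr\} \;=\; \gamma \cdot lev_{1,\,\theta/\gamma}(u,v),
\]
where the minimum runs over all edit scripts with $n$ insertions/deletions and $m$ replacements. So if $\theta/\gamma \leq 2$, we simply take $\theta' = \theta/\gamma$ and we are done.

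The remaining case is $\theta/\gamma > 2$, and here I want to show that $lev_{1,\theta/\gamma} = lev_{1,2}$ identically on $A^* \times A^*$. I would give a two-sided inequality: any edit script realizing $lev_{1,\theta/\gamma}(u,v)$ is also a legal script scored under $lev_{1,2}$, and since $\theta/\gamma > 2$, its $lev_{1,2}$-cost is no larger, giving $lev_{1,2}(u,v) \leq lev_{1,\theta/\gamma}(u,v)$. Conversely, given any edit script for $lev_{1,2}$, replace each of its $m$ replacement operations by an insertion followed by a deletion; the result uses $n + 2m$ insertions/deletions and $0$ replacements, has the same $lev_{1,2}$-cost $n+2m$ (since $2\cdot m = 2m$), and has $lev_{1,\theta/\gamma}$-cost $n+2m$, proving $lev_{1,\theta/\gamma}(u,v) \leq lev_{1,2}(u,v)$. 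Combining these gives equality, and then $lev_{\gamma,\theta}(u,v) = \gamma \cdot lev_{1,2}(u,v)$, so we may take $\theta' = 2$.

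I do not expect a real obstacle here; the argument is essentially the elementary fact that multiplying all edit weights by a positive constant scales the distance by that constant, combined with the standard observation that a replacement is interchangeable with the pair (insertion, deletion) at cost $2\gamma$, so a replacement weight larger than $2\gamma$ is never used in an optimal script. The only mild care needed is to package the answer so that $\theta'$ lands in the half-open interval $(0,2]$ rather than the open interval $(0,2)$, which is why the cap at $\theta' = 2$ is included.
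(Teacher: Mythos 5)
Your proposal is correct and takes exactly the paper's approach: the paper simply declares $\theta' = \min(\theta/\gamma, 2)$ with no justification, and you supply the two missing verifications (the scaling identity $lev_{\gamma,\theta} = \gamma\, lev_{1,\theta/\gamma}$, and the fact that a replacement weight exceeding $2$ is never used in an optimal script, so $lev_{1,\theta/\gamma} = lev_{1,2}$ when $\theta/\gamma > 2$). The only nitpick is that under the paper's definition of homothety (the target distance equals $t$ times the source distance) your constant should be $t = 1/\gamma$ rather than $t = \gamma$, which does not affect the argument.
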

\begin{proof}
As $\theta$ we can take $\min(\frac{\theta_0}{\gamma_0}, 2)$
\end{proof}

We will denote the distance $lev_{1, \theta}$ as $lev_\theta$, and the isometry group of the language $L$ with respect to it as $Isom_{\theta}(L)$.

The class of isometry groups of languages over a one-element alphabet is rather small.

\begin{proposition}
Let $L \subset \{a\}^*$, $\theta \in (0; 2]$. Then $Isom_\theta(L)$ is either trivial or isomorphic to a cyclic group of order 2.
\end{proposition}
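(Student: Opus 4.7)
The plan is to identify $L$ with a subset of $\mathbb{N}_0$ and observe that under this identification $lev_\theta$ becomes the standard absolute-value metric, reducing the problem to the classification of isometries of subsets of the real line.

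First, I would write $L = \{a^n : n \in S\}$ for some $S \subset \mathbb{N}_0$. For any two words $a^n, a^m \in L$, the shorter one is automatically a subsequence of the longer (the alphabet has only one letter), so the equality case of the bound $lev_{\gamma,\theta}(u,v) \geq \gamma \bigl||u|-|v|\bigr|$ from Section 2 applies; since we have normalized $\gamma = 1$, this gives $lev_\theta(a^n, a^m) = |n-m|$. Therefore $(L, lev_\theta)$ is canonically isometric to $(S, d)$ with $d(n,m) = |n-m|$, and it suffices to compute $\mathrm{Isom}(S,d)$.

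Next, I would argue that every isometry $\phi$ of $(S,d)$ is the restriction of a global isometry of $\mathbb{R}$. If $|S| \leq 1$ there is nothing to do; otherwise pick two distinct points $s_0, s_1 \in S$. The equation $|\phi(x) - \phi(s_0)| = |x - s_0|$ gives $\phi(x) = \phi(s_0) \pm (x - s_0)$ for each $x \in S$, and comparing with $|\phi(x) - \phi(s_1)| = |x - s_1|$ forces the sign to be the same for every $x$ (this is the only step requiring even a moment's care). Hence $\phi$ extends either to a translation $x \mapsto x + t$ or to a reflection $x \mapsto c - x$ of $\mathbb{R}$ that preserves $S$.

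Finally, I would dispose of the two cases. Since $S \subset \mathbb{N}_0$, the minimum $\min S$ exists, so the only translation preserving $S$ is the identity. A reflection $x \mapsto c - x$ preserving $S$ requires $S$ to be bounded, and if it exists it is uniquely determined by $c = \min S + \max S$. Thus $\mathrm{Isom}_\theta(L)$ consists of the identity and at most one involution, and is therefore either trivial or isomorphic to $C_2$. The only mildly non-automatic step is the sign-consistency argument in the middle paragraph; everything else reduces to standard facts about isometries of $\mathbb{R}$.
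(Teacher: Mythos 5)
Your proof is correct, and it begins exactly as the paper does: identify $a^n$ with $n$ and observe that $lev_\theta$ becomes the absolute-value metric on a subset $S \subset \mathbb{N}_0$, so everything reduces to computing $\mathrm{Isom}(S, |x-y|)$. Where you diverge is in how that group is computed. The paper works internally: it sorts the elements of $S$ as $n_0 < n_1 < \dots$, uses a triangle-inequality argument to show that the minimum $n_0$ must map to the minimum or the maximum, and then runs an induction (``$n_k$ is the unique nearest element to $n_{k-1}$ outside $\{n_0,\dots,n_{k-1}\}$'') to show the isometry is forced to be the identity or the order reversal. You instead invoke the rigidity of isometries of the line: every distance-preserving map of $S$ is the restriction of $x \mapsto x + t$ or $x \mapsto c - x$, with the sign-consistency check against two base points $s_0, s_1$ supplying the only nontrivial step, and then boundedness below kills all translations while boundedness determines the unique candidate reflection $c = \min S + \max S$. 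Both arguments are sound. Yours is shorter and makes the structural reason transparent (the ambient line has isometry group generated by translations and one reflection, and $\mathbb{N}_0$ admits no nontrivial translations); the paper's induction is more elementary and self-contained, explicitly exhibiting the two candidate maps without appealing to an extension theorem, and its ``unique nearest unused element'' step is the same rigidity phenomenon in disguise. One small point worth making explicit in your write-up: the sign-consistency claim should be verified as you indicate (if $\phi(x) = \phi(s_0) - (x - s_0)$ while $\phi(s_1) = \phi(s_0) + (s_1 - s_0)$, then $|\phi(x)-\phi(s_1)| = |x + s_1 - 2s_0|$, which equals $|x - s_1|$ only when $x = s_0$ or $s_1 = s_0$), but this is exactly the ``moment's care'' you flagged, and it goes through.
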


\begin{proof}
$l: a^n \mapsto n$ is an isometry of $(\{a\}^*, lev_{\theta})$ onto the metric space $(\mathbb{N}, d(m, n) = |n-m| )$.
\newline
From this we can conclude that $Isom(L) \cong Isom(l(L))$.
\newline
Suppose $N \subset \mathbb{N}$, $|N| \geq 2$ (everything is obvious for $|N|=1$). Let $n_0, n_1, ... $ be elements of $N$ sorted in ascending order, $\phi$ be an isometry from $N$.
\newline
$\phi(n_0) \in \{n_0, n_{|N|}\}$. Otherwise, if $\phi(n_0) = n_i \neq n_{|N|}$, then $$|n_{i+1} - n_{i-1}| = |\phi^{-1}(n_{i+1}) - \phi^{-1}(n_{i-1})| <$$ $$< |\phi^{-1}(n_{i+1}) - n_0| + |n_0 - \phi^{-1}(n_{i-1})| = $$ $$ = |(n_{i+1} - n_i| + |n_{i} - n_{i-1}| = |n_{i+1} - n_{i-1}|$$
It's impossible.
\newline
Let $\phi(n_0) = n_0$. Let us prove by induction that $\phi$ is trivial:
\begin{itemize}
\item[Base:] For $N = 0$ $\phi(n_0) = n_0$ (no other items).
\item[Step:] suppose that $\phi(n_i) = n_i$ for all $i < k$. Then $n_k$ is the only nearest element to $n_{k-1}$ not contained in $\{n_0, ... , n_{k-1}\}$. Therefore, $\phi(n_k)$ is the only nearest element to $\phi(n_{k-1})=n_{k-1}$ not contained in $\phi(\{n_0, ... , n_{ k-1}\}) = \{n_0, ... , n_{k-1}\}$, i.e. $\phi(n_k)=n_k$.
\end{itemize}
Let $\phi(n_0) = n_{|N|}$. Let us prove by induction that $\phi: n_k \mapsto n_{|N|-k}$.
\newline
\begin{itemize}
\item[Base:] For $N = 0$ $\phi(n_0) = n_0 = n_{|N|}$ (no other elements).
\item[Step:] suppose that $\phi(n_i) = n_{|N|-i}$ for all $i < k$. Then $n_k$ is the only nearest element to $n_{k-1}$ not contained in $\{n_0, ... , n_{k-1}\}$. Therefore, $\phi(n_k)$ is the only nearest element to $\phi(n_{k-1})=n_{|N|-k+1}$ not contained in $\phi(\{n_0, .. . , n_{k-1}\}) = \{n_{|N|}, ... , n_{|N|-k+1}\}$, i.e. $\phi(n_k)=n_{ |N|-k}$.
\end{itemize}
Thus, there can be no other isometries, except for these two (and the second one is far from always realized).
\newline
Hence $Isom(L)$ is either trivial or isomorphic to a cyclic group of order 2.

\end{proof}

However, already for the two-element alphabet, the isometry groups are more complicated, which will be demonstrated in the following sections.

\section{Proof of Theorem 1}

\begin{lemma}[\cite{higman}]
There is no infinite language over a finite alphabet in which no word occurs as a subsequence of another.
\end{lemma}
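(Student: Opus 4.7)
The plan is to use the Nash--Williams minimal bad sequence argument. First I would reformulate: an infinite language $L$ in which no word is a subsequence of another gives, under any enumeration, an infinite \emph{bad sequence} $(w_i)_{i \geq 1}$ in $A^*$, i.e.\ a sequence such that $w_i$ is not a subsequence of $w_j$ for any $i < j$. Thus it suffices to show that no infinite bad sequence exists when $A$ is finite.

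Assuming toward contradiction that such a bad sequence exists, I would construct a \emph{minimal} one recursively: having chosen $w_1, \ldots, w_{k-1}$ in such a way that this prefix can be extended to an infinite bad sequence, let $w_k$ be a word of smallest possible length such that $w_1, \ldots, w_k$ still extends to an infinite bad sequence. None of the $w_i$ can be empty (the empty word is a subsequence of everything), so write $w_i = a_i u_i$ with $a_i \in A$ and $u_i \in A^*$. Since $A$ is finite, pigeonhole produces a letter $a \in A$ and indices $i_1 < i_2 < \cdots$ with $a_{i_k} = a$ for all $k$.

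Now consider the new sequence
$$w_1,\, w_2,\, \ldots,\, w_{i_1-1},\, u_{i_1},\, u_{i_2},\, u_{i_3},\, \ldots$$
Its $i_1$-th term $u_{i_1}$ is strictly shorter than $w_{i_1}$, so by the minimality of our construction this new sequence cannot be bad: there exist positions $p < q$ whose terms are comparable in the subsequence order. Three cases arise. If both positions lie in the original initial segment $w_1, \ldots, w_{i_1-1}$, this directly contradicts the badness of the original sequence. If $p < i_1$ while the $q$-th term is some $u_{i_j}$, then $w_p$ embeds into $u_{i_j}$, hence into $w_{i_j} = a u_{i_j}$, contradiction. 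If both positions correspond to tails $u_{i_j}$ and $u_{i_k}$ with $j < k$, then prepending $a$ to each shows that $w_{i_j}$ is a subsequence of $w_{i_k}$, again a contradiction.

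The substantive step is really just setting up the minimal bad sequence (an application of dependent choice which I would invoke without fuss); the finiteness of $A$ then forces the pigeonhole step, and the three-case analysis is routine. The main potential obstacle is bookkeeping in the case analysis, specifically ensuring that the "mixed" case (where $p$ lies in the original segment and $q$ indexes a tail) correctly inherits a contradiction from the original badness --- which it does because subsequence-ness is transitive and $u_{i_j}$ is a subsequence of $w_{i_j}$.
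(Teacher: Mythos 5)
Your argument is correct: it is the standard Nash--Williams minimal bad sequence proof of Higman's lemma, and every step checks out --- the reduction from an infinite antichain to an infinite bad sequence, the minimal choice of each term by length, the exclusion of the empty word, the pigeonhole on first letters, and the three-case analysis (in the mixed case the chain $w_p \preceq u_{i_j} \preceq w_{i_j}$ with $p < i_1 \leq i_j$ does contradict badness of the original sequence, as you say). The paper itself does not prove this lemma at all: it is stated with a citation to Higman's 1952 paper and used as a black box. So your proposal is not so much a different route as the only actual proof on the table. It is worth noting that what you give is not Higman's original argument (which proceeds by an order-theoretic induction on well-quasi-ordered abstract algebras and yields a more general closure theorem) but the later, shorter Nash--Williams argument; for the special case needed here --- finite alphabet, subsequence order --- your version is exactly the right level of generality and is self-contained apart from the routine appeal to dependent choice in constructing the minimal bad sequence.
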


We will denote by $M(L)$ the set of all minimal words in the language $L$ with respect to the inclusion order as a subsequence.

\begin{lemma}
Let $L$ be a formal language, $\theta \in (0; 2)$. Then if $Isom_\theta(L)$ acts transitively on $L$, then $L$ is finite.
\end{lemma}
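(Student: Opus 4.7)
The proof goes by contradiction: assume $L$ is infinite and $Isom_\theta(L)$ acts transitively. The key tools are Lemma 1 (Higman) and the two standard inequalities for $lev_\theta$ stated earlier in this section: the lower bound $lev_\theta(u,v) \geq ||u|-|v||$ with equality iff the shorter word is a subsequence of the longer, and the complementary upper bound $lev_\theta(u,v) \leq (\theta - 1)\min(|u|,|v|) + \max(|u|,|v|)$.

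First, Lemma 1 forces $M(L)$ to be finite, so the maximum length $R$ attained by an element of $M(L)$ is finite. Let $\ell_0 = \min\{|w|: w \in L\}$, attained by some $w_0 \in L$; automatically $w_0 \in M(L)$. Since $L$ is infinite over a finite alphabet, the lengths of elements of $L$ are unbounded.

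Next, for any $w \in L$ with $|w| > R$ we have $w \notin M(L)$, so there is $m \in M(L)$ with $m$ a proper subsequence of $w$, and hence $lev_\theta(w, m) = |w| - |m|$. Transitivity supplies an isometry $\phi$ with $\phi(w) = w_0$; setting $v := \phi(m) \in L$, we get $lev_\theta(w_0, v) = |w| - |m|$, a value that grows without bound as $|w|$ does. Combining the two inequalities above pins $|v|$ to a window of width depending only on $\ell_0$ and $\theta$ around $|w| - |m| + \ell_0$; in particular $|v|$ is itself unbounded as $w$ varies. So we have manufactured an infinite family of elements of $L$ of arbitrarily large length, each sitting at a prescribed exact distance from $w_0$.

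The crux of the proof, and the main obstacle, is to convert this setup into an actual contradiction. My plan is to apply Higman's Lemma a \emph{second} time to the infinite family of $v$'s just produced, yielding a pair $v' \subsetneq v''$ in $L$ with $lev_\theta(v', v'') = |v''| - |v'|$; then, by pulling this pair back through further isometries provided by transitivity and carefully tracking distances together with the two inequalities, I expect to force a configuration that demands an element of $L$ of length strictly less than $\ell_0$, contradicting the minimality of $w_0$. The technical heart is to keep the exact-distance constraints compatible while playing the subsequence structure of $L$ (made usable by Higman) off against the minimality of $w_0$; the hypothesis $\theta < 2$ enters through the inequalities used to pin down $|v|$ as above.
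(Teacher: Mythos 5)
Your proposal sets up correct and relevant preliminaries (Higman makes $M(L)$ finite; subsequence containment gives exact distances $|w|-|m|$; transitivity transports these to the neighbourhood of a shortest word $w_0$), but it does not contain a proof: you explicitly defer the contradiction to a ``plan'' whose success you only ``expect''. That deferred step is the entire content of the lemma, and the particular plan you sketch --- forcing an element of $L$ of length strictly less than $\ell_0$ --- has no visible mechanism behind it. Isometries preserve distances, not lengths (indeed, the whole point of Theorem~1, which relies on this lemma, is to \emph{prove} a bound on how much lengths can change), so nothing in your setup compels the existence of a word shorter than $w_0$; an isometry could a priori shuffle long words among themselves while respecting every exact-distance constraint you have written down. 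You also locate the role of $\theta<2$ in ``pinning down $|v|$'', which is not where it is actually needed.

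The paper closes the argument differently, and the missing idea is a \emph{three}-term subsequence chain with quantitatively controlled length gaps. Iterating the Higman-plus-pigeonhole step, one finds nested infinite sublanguages with unique minimal words $w_0 \subseteq w_1 \subseteq w_2$ (as subsequences) satisfying $|w_1| > \frac{2}{2-\theta}|w_0|$ and $|w_2| > 2|w_1|$. Subsequence containment makes the distances additive: $lev_\theta(w_0,w_1)+lev_\theta(w_1,w_2)=lev_\theta(w_0,w_2)=|w_2|-|w_0|$. Transitivity supplies $\phi$ with $\phi(w_1)=w_0$; then $|\phi(w_0)|$ and $|\phi(w_2)|$ are each at most $|w_0|$ plus their distance to $w_0$, and feeding these bounds into the upper estimate $lev_\theta(u,v)\leq \max(|u|,|v|)+(\theta-1)\min(|u|,|v|)$ yields $lev_\theta(\phi(w_0),\phi(w_2)) \leq \theta|w_0|+|w_2|-|w_1|+(\theta-1)(|w_1|-|w_0|)$, which is strictly less than $|w_2|-|w_0|$ precisely because $2|w_0| < (2-\theta)|w_1|$ --- this is where $\theta<2$ genuinely enters. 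The contradiction is thus a violation of the diameter bound by an additive chain recentred at a short word, not the production of a too-short word. Your write-up would need to be rebuilt around this (or some equivalent) quantitative mechanism before it constitutes a proof.
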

\begin{proof}
Let $L$ be an infinite language. $M(L)$ is finite by Lemma 1. Therefore, by the Dirichlet principle, $L$ has an infinite sublanguage with a unique minimal word. Let's denote it as $L_0$ and the only minimal word in it as $w_0$.
\newline
Language $\{w \in L_0| |w| > \frac{2}{2 - \theta}|w_0|\}$ is also infinite. So, arguing similarly, it also has an infinite sublanguage with a single minimal word. Let's denote it as $L_1$ and the only minimal word in it as $w_1$.
\newline
Language $\{w \in L_1| |w| > 2|w_1|\}$ is also infinite. So, arguing similarly, it also has an infinite sublanguage with a single minimal word. Let's denote it as $L_2$, and the only minimal word in it as $w_2$.
\newline
Now suppose that $Isom_\theta(L)$ acts transitively on $L$. Then there exists $\phi \in Isom_\theta(L)$ such that $\phi(w_1) = w_0$. Then the chain of inequalities is fulfilled:

$$lev_\theta(w_0, w_1) + lev_\theta(w_1, w_2) = lev_\theta(w_0, w_2) = lev_\theta(\phi(w_0), \phi(w_2)) \leq $$ $$ \leq \max(|\phi(w_0)|, |\phi(w_2)|) + (\theta - 1) \min(|\phi(w_0)|, |\phi(w_2)|) \leq $$ $$\leq \theta |w_0| + \max(lev_\theta(w_0, \phi(w_0)), lev_\theta(w_0, \phi(w_2))) + (\theta - 1)\min(lev_\theta(w_0, \phi(w_0) )), lev_\theta(w_0, \phi(w_2))) = $$ $$ = \theta |w_0| + \max(lev_\theta(w_1, w_0), lev_\theta(w_1, w_2)) + (\theta - 1)\min(lev_\theta(w_1, w_0), lev_\theta(w_1, w_2)) = $$ $$ = \theta |w_0| + |w_2| - |w_1| + (\theta - 1)(|w_1| - |w_0|) < lev_\theta(w_0, w_1) + lev_\theta(w_1, w_2)$$

Contradiction.
\end{proof}

Let now $m = \max\{|\phi(v)| -|v|| v \in M(L), \phi \in Isom_\theta(L)\}$ (this set is finite by Lemmas 1 and 2). Let us prove by contradiction that $||w| - |\phi(w)|| \leq m$.

Let $||w| - |\phi(w)|| >m$. Without loss of generality, we assume that $|w| < |\phi(w)|$. Now let $u \in M(L)$ be contained in $w$ as a subsequence.

Then

$$lev_\theta(w, u) = lev_\theta(phi(w), phi(u)) \geq |\phi(w)| - |\phi(u)| > |w| + m - |\phi(u)| \geq $$ $$\geq |w| + |\phi(u)| - |u| - |\phi(u)| = |w| - |u| = lev_\theta(w, u)$$

Contradiction.$\square$

\begin{corollary}
The isometry group of any formal language $L$ embeds in $\Pi_{n=1}^\infty S_{n}$.
\end{corollary}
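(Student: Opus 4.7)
The plan is to deduce the corollary from Theorem 1 by showing that the isometry group $G := Isom_\theta(L)$ acts on $L$ with finite orbits, and then to package the per-orbit permutations into an embedding into $\Pi_{n=1}^\infty S_n$.

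First I would invoke Theorem 1 to fix $m$ with $||\phi(w)|-|w|| \leq m$ for every $\phi \in G$ and every $w \in L$. Fixing any $w \in L$, every element of its orbit $G \cdot w$ has length in $[|w|-m, |w|+m]$; since the alphabet is finite, only finitely many words of $L$ can have length in this range, so each orbit is finite. Writing $L$ as the disjoint union of its orbits $\{O_i\}_{i \in I}$, I obtain a countable index set $I$ and finite cardinalities $n_i := |O_i|$. The restriction-to-orbits homomorphism $G \to \Pi_{i \in I} Sym(O_i) \cong \Pi_i S_{n_i}$ is injective, since an isometry acting trivially on every orbit acts trivially on $L$.

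It remains to embed $\Pi_{i \in I} S_{n_i}$ into $\Pi_{n=1}^\infty S_n$. Enumerating $I = \mathbb{N}$, I define recursively $\sigma(1) := n_1$ and $\sigma(i) := \max(\sigma(i-1)+1, n_i)$; then $\sigma: \mathbb{N} \to \mathbb{N}$ is strictly increasing with $\sigma(i) \geq n_i$, so the standard inclusions $S_{n_i} \hookrightarrow S_{\sigma(i)}$ give $\Pi_i S_{n_i} \hookrightarrow \Pi_i S_{\sigma(i)}$, and the latter sits in $\Pi_{n=1}^\infty S_n$ by taking the identity in those factors whose index is outside $\sigma(\mathbb{N})$.

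The substantive step is the first one: once Theorem 1 is available, finiteness of orbits is immediate from finiteness of the alphabet, and the remaining re-indexing is purely bookkeeping. No genuine combinatorial obstacle arises because there are countably many orbits and each is finite, so the sizes $n_i$ can always be absorbed into distinct symmetric-group factors of the target product.
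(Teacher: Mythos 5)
Your proof is correct and follows essentially the same route as the paper: finite orbits plus an effective action give an injection into the product of the symmetric groups on the orbits, which then sits inside $\Pi_{n=1}^\infty S_n$. The only (harmless) difference is that you derive finiteness of orbits directly from the length bound of Theorem 1, whereas the paper cites Lemma 2, and you spell out the re-indexing step that the paper leaves implicit.
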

\begin{proof}
Let $n_1, n_2, ...$ be the orders of the orbits of the natural action of $Isom(L)$ on $L$ (the orbits are finite by Lemma 2). Since $Isom(L)$ acts effectively on $L$, it embeds in $\Pi_{i=1}^\infty S_{n_i} \leq \Pi_{n=1}^\infty S_{n}$ .
\end{proof}

\begin{proposition}
There is a regular language $L \subset \{0, 1\}^*$ such that the isometry group $Isom_2(L) \cong D_\infty$, and it acts transitively on $L$.
\end{proposition}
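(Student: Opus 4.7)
The plan is to take $L = 0^* \cup 1^*$, to show that $(L, lev_2)$ is isometric to $(\mathbb{Z}, d)$ with $d(m,n) = |m-n|$, and then to conclude by transport of structure that $Isom_2(L) \cong Isom(\mathbb{Z}, d) \cong D_\infty$ with transitive action (inherited from the translations on $\mathbb{Z}$). Regularity of $L = 0^* \cup 1^*$ is immediate from Definition 5.

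For the isometry, I would use the bijection $\Phi: \mathbb{Z} \to L$ defined by $\Phi(n) = 0^n$ for $n \geq 0$ and $\Phi(n) = 1^{-n}$ for $n < 0$, the two cases agreeing at $n = 0$, where both yield the empty word. The verification that $\Phi$ is an isometry splits into three cases. For $m, n \geq 0$, the word $0^{\min(m,n)}$ is a subsequence of both $0^m$ and $0^n$, so the lower bound $lev_2(u,v) \geq ||u| - |v||$ recorded after Proposition 2 is tight and $lev_2(0^m, 0^n) = |m - n|$; the case $m, n \leq 0$ is symmetric. In the mixed case $m \geq 0 > n$, the two words $0^m$ and $1^{-n}$ share no common symbol, so their only common subsequence is empty; combined with the fact that $\theta = 2$ makes a replacement exactly as expensive as an insertion followed by a deletion, an induction on $m + (-n)$ using the recursion of Proposition 2 gives $lev_2(0^m, 1^{-n}) = m + (-n) = |m - n|$.

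The only non-routine step is this mixed-case equality, where the value $\theta = 2$ is essential: for any $\theta < 2$ the infinite transitive orbit constructed here would contradict Lemma 2, so the boundary case $\theta = 2$ is precisely the smallest value at which such a regular language becomes possible. Once $\Phi$ is known to be an isometry, the isometries of $(\mathbb{Z}, d)$, namely the maps $n \mapsto \pm n + k$, form $D_\infty$ and act transitively via translations; conjugating through $\Phi$ yields $Isom_2(L) \cong D_\infty$ acting transitively on $L$, which is the desired conclusion.
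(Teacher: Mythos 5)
Your proposal is correct and follows essentially the same route as the paper: the paper also takes $L = 1^* \cup 0^*$, maps $0^n \mapsto n$ and $1^n \mapsto -n$, verifies the same three distance cases (with the mixed case giving $m+n$ because at $\theta = 2$ a replacement costs the same as a deletion plus an insertion), and transports the $D_\infty$ structure and transitive action from $(\mathbb{Z}, |x-y|)$. Your write-up is somewhat more explicit about why the mixed-case computation works and why $\theta = 2$ is the boundary value, but the argument is the same.
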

\begin{proof}
Consider a regular language $L = 1^* \cup 0^*$ and a bijection $\phi: L \to \mathbb{Z}$ given by the formulas
$$\phi(0^n) = n$$
$$\phi(1^n) = -n$$
for all $n \in \mathbb{N}_0$.
\newline
Let us show that $\phi$ is an isometry of the spaces $(L, lev_2)$ and $(\mathbb{Z}, d(x, y) = |x - y|)$.
Really

$$lev_2(0^n, 0^m) = |m - n|$$
$$lev_2(1^n, 1^m) = |m - n|$$
$$lev_2(0^n, 1^m) = m + n$$

for all $n, m \in \mathbb{N}_0$.
\newline
Moreover, the isometry group of the metric space $(\mathbb{Z}, d(x, y) = |x - y|)$ acts transitively on it and is isomorphic to $D_\infty$.
\end{proof}

\section{Word stretching}

We will denote the Hamming distance as $h$.

\begin{definition}
The \textbf{word stretching} is the operation $st: A^* \times A^* \to A^*$ given recursively:

$$st(\Lambda, w_2) = \Lambda$$
$$st(w_1 a, w_2) = st(w_1, w_2)w_2a$$
for all $w_1, w_2 \in A^*, a \in A$
\end{definition}

\begin{lemma}
Let the words $w_1, w_2 \in A^*$, where $|w_1| = |w_2|$.
\newline
Let $k \in \mathbb{N}$, where $k > h(w_1, w_2)$.
\newline
Let the symbols $a, b \in A$, and $a \neq b$.
\newline
Let $\theta \in (0; 2]$.
\newline
Then the equality holds: $$lev_\theta(st(w_1, a^k b a^k), st(w_2, a^k b a^k)) = h(w_1, w_2)$$
\end{lemma}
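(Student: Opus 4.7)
The plan is to establish the equality by matching upper and lower bounds on the Levenshtein cost. Writing $w_1 = c_1 c_2 \cdots c_n$ and $w_2 = d_1 d_2 \cdots d_n$, one unrolls the stretching as
\begin{align*}
st(w_1, a^k b a^k) &= (a^k b a^k c_1)(a^k b a^k c_2) \cdots (a^k b a^k c_n),\\
st(w_2, a^k b a^k) &= (a^k b a^k d_1)(a^k b a^k d_2) \cdots (a^k b a^k d_n),
\end{align*}
so the two words have common length $n(2k+2)$ and already agree outside the $n$ ``slot'' positions carrying $c_i$ or $d_i$, of which exactly $h(w_1, w_2)$ are genuine mismatches. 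The upper bound is then almost immediate: one applies a single-character edit at each mismatched slot and leaves the rest untouched, realising the desired cost.

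The main work lies in the lower bound, and my strategy is to exploit the \emph{marker-rigidity} provided by the long $a$-runs. Each stretched word carries $n$ ``designated'' $b$'s (those inserted by the stretching), and any two consecutive designated $b$'s are separated by $2k+1$ symbols, among which only the middle one (a slot character) can fail to be $a$. Given an optimal edit sequence $\sigma$ that transforms $st(w_1, a^k b a^k)$ into $st(w_2, a^k b a^k)$, I would extract from $\sigma$ an associated alignment of the two words and argue that if the $j$-th designated $b$ of the first word is not paired with the $j$-th designated $b$ of the second, then $\sigma$ must contain so many insertions, deletions, or substitutions inside a single $a$-run that its total cost overshoots the naïve substitution strategy. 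Since $k > h(w_1, w_2)$, this rules out every shifted alignment and forces any optimal $\sigma$ to pair designated $b$'s in the natural order.

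Once marker-alignment is fixed, $\sigma$ decomposes as a concatenation of $n$ block-level sub-sequences, each transforming a block $a^k b a^k c_i$ into $a^k b a^k d_i$. Blocks with $c_i = d_i$ contribute zero cost; the remaining $h(w_1, w_2)$ blocks differ in exactly one position, and any edit of such a block must contribute at least the minimum per-mismatch cost, so summing over blocks yields the matching lower bound. I expect the main technical obstacle to be the marker-rigidity step itself --- namely, formalising the extraction of an alignment from an edit sequence and quantifying the penalty for misalignment cleanly enough that $k > h(w_1, w_2)$ suffices to exclude it. Additional care will be needed when a slot character $c_i$ or $d_i$ happens to coincide with $b$, since this introduces ``accidental'' extra $b$'s in the stretched words that must not be conflated with designated ones during the bookkeeping.
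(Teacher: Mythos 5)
Your plan is essentially the paper's own argument: both proofs exploit the fact that the designated $b$'s are isolated inside $a$-runs of length $k > h(w_1,w_2)$, so any optimal edit sequence must respect the natural block alignment, after which the cost decomposes block-by-block into one substitution per mismatched slot. The one step you explicitly defer (quantifying the misalignment penalty) is exactly where the paper does its work, and it closes it by a local exchange rather than a global alignment count: a delete/insert pair whose intervening segment carries $t$ significant symbols displaces $t-1$ designated $b$'s onto $a$-positions, forcing cost at least $2(t-1)\theta + 2$, which is at least the $t\theta$ of plain substitution precisely because $\theta \leq 2$; iterating this surgery removes all indels from a minimal path. If you carry out your alignment version, make sure the hypothesis $\theta \leq 2$ enters your comparison (for $\theta > 2$ a substitution is dearer than an indel pair and the whole argument changes), and note that both your upper bound and the block-level lower bound actually give $\theta\, h(w_1,w_2)$ rather than $h(w_1,w_2)$ --- a normalization discrepancy already present in the paper's statement, not a flaw introduced by you.
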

\begin{proof}
Let us show that there is a minimal <<path>> from $st(w_1, a^k b a^k)$ to $st(w_2, a^k b a^k)$ that does not contain insertions or deletions.
\newline
Indeed, suppose that the shortest <<path>> contains a deletion. $k$ deletions cannot contain this <<path>> (otherwise it will not be the shortest one). This means that the <<segment>> of the word between this deletion and one of the insertions closest to it (on one side or the other) is shifted by a distance less than $k$.
\newline
At the same time, at positions that are not multiples of $k + 1$ (numbering starts from 1), there are no symbols different from $a$, and at positions comparable to $k + 1$ modulo $2k + 2$ there are no symbols different from b .
\newline
So, if there are $t$ significant symbols on this <<segment>>, then there are also $t-1$ <<central>> $b$ symbols on it that turned out to be shifted. It would take $2(t - 1)$ replacement operations to put them in order. This means that the total cost of operations on the interval (including the initial deletion and insertion) will be at least $2(t - 1)\theta + 2$. In this case, element-by-element replacement of all significant characters on the segment without any insertions and deletions will cost $t\theta \leq 2(t - 1)\theta + 2$.
\newline
So, gradually changing the pairs <<insert / delete>> to replace significant characters, we get the desired <<path>>.
\end{proof}

\section{Proof of Theorem 2}

\begin{lemma}
Let $\Gamma(V, E)$ be a finite simple cubic graph. Then there exists a language $\{w_v\}_{v \in V} \subset \{0; 1\}^{|E|}$ such that

$$h(w_u, w_v) = \begin{cases} 4 & \quad \{u, v\} \in E \\ 6 & \quad (u, v) \not\in E \end{cases}$$
\end{lemma}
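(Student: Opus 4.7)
The plan is to encode each vertex by the indicator vector of its incident edges. Label the $|E|$ coordinates of $\{0,1\}^{|E|}$ by the edges of $\Gamma$, and for each $v \in V$ let $E_v := \{e \in E : v \in e\}$ be the set of edges incident to $v$. Define $w_v \in \{0,1\}^{|E|}$ by declaring $(w_v)_e = 1$ if $e \in E_v$ and $(w_v)_e = 0$ otherwise.

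For distinct $u, v \in V$, the coordinates on which $w_u$ and $w_v$ disagree are exactly the edges in the symmetric difference $E_u \triangle E_v$. Because $\Gamma$ is cubic we have $|E_u| = |E_v| = 3$, so
$$h(w_u, w_v) = |E_u \triangle E_v| = |E_u| + |E_v| - 2|E_u \cap E_v| = 6 - 2|E_u \cap E_v|.$$
Since $\Gamma$ is simple, an edge lies in $E_u \cap E_v$ if and only if it equals $\{u, v\}$. Hence $|E_u \cap E_v| = 1$ when $\{u, v\} \in E$ and $|E_u \cap E_v| = 0$ otherwise, producing Hamming distance $4$ in the first case and $6$ in the second. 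Both values are strictly positive, so $v \mapsto w_v$ is injective and $\{w_v\}_{v \in V}$ is the desired language.

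The whole argument reduces to a direct combinatorial identity, and the only real decision is to pick the vertex–edge incidence encoding. Since the inclusion–exclusion for the symmetric difference automatically forces the distances to fall in $\{4, 6\}$ under the cubic and simple hypotheses, I do not expect any substantive obstacle beyond identifying this encoding.
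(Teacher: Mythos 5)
Your proof is correct and uses exactly the same construction as the paper: the word $w_v$ is the incidence vector of $v$ over the edge set, and the distance computation $h(w_u,w_v)=6-2|E_u\cap E_v|$ is the same identity the paper leaves implicit. Your write-up actually supplies the verification the paper dismisses as "easy to see."
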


\begin{proof}
Let $E = \{e_0, ... , e_m\}$. Let's define $i:V\times E \to \{0, 1\}^*$ as follows:
\newline
$$i(v, e) = \begin{cases} 1 & \quad v \in e \\ 0 & \quad v \not\in e \end{cases}$$
\newline
Let's define the word $w_v = i(v, e_0)i(v, e_1)...i(v, e_{|E|})$.
\newline
It is easy to see that the Hamming distances between words turned out to be as required.
\end{proof}

\begin{theorem}{\em(\cite{frucht})}
A group is finite if and only if it is isomorphic to the automorphism group of some cubic graph.
\end{theorem}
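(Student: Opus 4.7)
The backward (easier) direction is immediate: every finite cubic graph $\Gamma(V, E)$ has automorphism group embedded in $S_{|V|}$, which is finite. For the forward direction, my plan is to follow the classical route of starting from a Cayley representation and then modifying it into a cubic graph without gaining or losing symmetries.

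I would begin with the directed edge-coloured Cayley graph $\mathrm{Cay}(G, S)$ of $G$ with respect to some generating set $S = \{s_1, \dots, s_k\}$. By the defining property of the Cayley graph, the group of colour- and direction-preserving automorphisms of $\mathrm{Cay}(G, S)$ is exactly $G$, acting by left multiplication. This reduces the task to the combinatorial one of encoding the colours and orientations of arcs by means of auxiliary subgraphs (``gadgets'') that individually break any unwanted symmetry, while making the whole graph 3-regular.

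The concrete implementation I have in mind: for each directed arc $u \xrightarrow{s_i} v$ of $\mathrm{Cay}(G, S)$, subdivide it and attach to the two internal vertices a pair of small rigid trees of distinct sizes depending on $i$, so that the orientation of the arc and the label $i$ are recoverable from the isomorphism type of the gadget and no non-trivial automorphism of the gadget swaps its endpoints. At each original vertex $u \in G$, the $2k$ incident arcs leave $2k$ ``stubs'', and I would replace $u$ by an asymmetric trivalent tree with $2k$ leaves whose internal structure fixes a canonical ordering of the stubs and in which every internal vertex has degree 3. All the gadget sizes should be chosen pairwise distinct across the construction so that gadgets of different roles cannot be mapped to one another.

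The main obstacle will be the rigidity verification: checking that any automorphism of the resulting cubic graph $\Gamma$ necessarily permutes the vertex-gadgets as blocks, preserves the incidence between vertex-gadgets and arc-gadgets, and respects the labels and directions encoded in the arc-gadgets. Once this is shown, such an automorphism descends to a colour- and direction-preserving automorphism of $\mathrm{Cay}(G, S)$, hence corresponds to a unique element of $G$; conversely, every left-multiplication automorphism of $\mathrm{Cay}(G, S)$ extends coherently to $\Gamma$ by the same action on the gadgets, giving $\mathrm{Aut}(\Gamma) \cong G$ as required.
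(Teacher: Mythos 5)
The paper does not actually prove this statement: it is quoted from Frucht's work \cite{frucht} and used as a black box, so there is no internal argument to compare yours against. Judged on its own, your outline follows the standard Frucht strategy (rigid edge- and vertex-gadgets glued onto a coloured directed Cayley graph), which is the correct general idea, but as written it contains a step that fails. The gadgets you describe cannot produce a \emph{cubic} graph: a rigid tree has leaves of degree $1$, and an ``asymmetric trivalent tree with $2k$ leaves'' placed at a vertex gives those leaves degree $2$ once each absorbs one arc-stub, not degree $3$. Since every vertex of a $3$-regular graph must have degree exactly $3$, the gadgets have to be built from cycles, paths closing up on themselves, or ladder-like pieces whose \emph{lengths} encode the generator label and the arc orientation; this degree bookkeeping is precisely the extra work in the cubic refinement of Frucht's 1938 theorem, and tree gadgets only suffice for realizing $G$ as the automorphism group of \emph{some} graph, not a cubic one.

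Beyond that, the step you yourself flag as ``the main obstacle'' --- showing that every automorphism of the assembled graph permutes the gadgets as blocks and therefore descends to a colour- and direction-preserving automorphism of the Cayley graph --- is the entire mathematical content of the theorem, and it is not carried out. Until one argues concretely that gadgets of different roles have distinct isomorphism types and cannot be interchanged (for instance, by distinguishing vertices through the lengths of the shortest cycles through them), the proposal remains a plan rather than a proof. You would also need to handle the degenerate cases separately: the trivial group requires exhibiting an asymmetric cubic graph outright, and very small generating sets need their own check.
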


Let $G$ be a finite group, $\Gamma(E, V)$ be the cubic graph corresponding to it by Theorem 7, $L_0$ be the language constructed for $\Gamma$ by Lemma 2, $L_1 = st (L_0, 1^7 0 1^7)$.
\newline
Since $diam(L_1) = 6$, by Lemma 1, $Isom_\theta(L_1) \cong G$. The length of words in $L_1$ is equal to $16|E| = 24|V| = 24|L_1|$.
$\square$

\section{Proof of Theorem 3}

Let $G_1, G_2, ... $ be a countable sequence of finite groups.
\newline
$L_1, L_2, ...$ are the corresponding languages constructed by Corollary 1.
\newline
We construct the languages $L_1^\prime, L_2^\prime, ...$ using recursion:

$$L_0^\prime = \{\Lambda\}$$

$$L_{n+1}^\prime = (01)^{l_n + 7} L_{n+1}$$

where $l_n$ is the length of words in $L_{n}^\prime$.

It is easy to see that the isometry group $L_n^\prime$ coincides with the isometry group $L_n$. Moreover, for any words $w_n \in L_n^\prime$ and $w_m \in L_m^\prime$, $m > n$ $lev_\theta(w_n, w_m) = l(L_m^\prime) - l(L_n^\prime)$. This equality is true because, to get a shorter word from a longer one, it is enough to remove everything except the $(01)^{l(L_n)}$ prefix, and then select the correct character from each $01$ block.
\newline
It follows from this that for any sequence $\phi_1, \phi_2, ...$ of isometries of languages $L_1^\prime, L_2^\prime, ...$ it is true that $\phi: w_n \mapsto \phi(w_n )$ for all $n \in \mathbb{N}$, $w_n \in L_n^\prime$ is an isometry of $\bigcup_{n=1}^\infty L_n^\prime$.
\newline
Moreover, $\bigcup_{n=1}^\infty L_n^\prime$ has no other isometries.
\newline
Indeed, $\Lambda$ goes into itself as the only word without neighbors at a distance of $4$ or $6$. So the length of words in this language (that is, the distance to $\Lambda$) is an invariant.
\newline
In other words, $Isom_\theta(\bigcup_{n=1}^\infty L_n^\prime) = \Pi_{n=1}^\infty Isom_\theta(L_n^\prime) = \Pi_{n=1 }^\infty Isom_\theta(L_n) = \Pi_{n=1}^\infty G_n$.
\newline
$\bigcup_{n=1}^\infty L_n^\prime$ has growth $O(n)$ since for any $n \in \mathbb{N}$ the length of all words in $L_n^\prime$ is greater $24|L_n|$, which means $|\{w \in L| |w| \leq k\}| \leq 1 + \frac{k}{24}$. $\square$

\section{Proof of Theorem 4}

Let $A = \{a_1, ..., a_k\}$ be an arbitrary alphabet.
Let's build languages $L_1, L_2, ...$ recursively:

$$L_0 = \Lambda$$

$$L_{n} = (a_1...a_k)^{l(k, n - 1)}st(A^{k^n}, a_1^{k^{n + 1}} a_2 a_1^{ k^{n + 1}})$$

where $l(k, n)$ is the length of $L_{n}$.
\newline
Note that $l(k, n) = l(k, n - 1) + 2k^{n}(k^{n + 1} + 2) = O(k^{2n})$. In this case, $|L_n| = k^{k^n}$.
Hence, the language $\bigcup_{n=1}^\infty L_n$ has growth $\sum_{l(k, t) \leq n} k^{k^t} = \Theta(k^{\sqrt{n }})$
\newline
Moreover, it is easy to see that the isometry group $L_n$ is isomorphic to $S_k^{k^n} \times S_{2^n}$. Moreover, the distance between any words $w_n \in L_n^\prime$, $w_m \in L_m$, where $m > n$, is equal to $l(L_m) - l(L_n)$, because to get a shorter words from a longer one, it is enough to remove everything except the prefix $(a_1...a_k)^{l(L_n)}$, and then select the correct character from each $a_1...a_k$.
\newline
It follows from this that for any sequence $\phi_1, \phi_2, ...$ of isometries of languages $L_1, L_2, ...$ it is true that $\phi: w_n \mapsto \phi(w_n)$ for all $n \in \mathbb{N}$, $w_n \in L_n$ is an isometry of $\bigcup_{n=1}^\infty L_n$.
\newline
Moreover, $\bigcup_{n=1}^\infty L_n$ has no other isometries.
\newline
Indeed, $\Lambda$ goes into itself as the only word without neighbors at a distance of $1$. So the length of words in this language (that is, the distance to $\Lambda$) is an invariant.
\newline
In other words, $Isom_\theta(\bigcup_{n=1}^\infty L_n) = \Pi_{n=1}^\infty Isom_\theta(L_n) = S_k^\infty \times \Pi_{n=1 }^\infty S_{k^n}$. $\square$.

\section{Proof of Theorem 5}

\begin{lemma}
Let $L \subset \{0, 1\}^n$, $\theta \in (0; 2]$. Then $Isom_\theta(L((01)^n)^*)\cong Isom(L )^\mathbb{N}$.
\end{lemma}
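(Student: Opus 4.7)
The plan is to decompose $L((01)^n)^*$ into \emph{levels} indexed by $k \in \mathbb{N}_0$, where level $k$ consists of the $|L|$ words $w(01)^{nk}$ with $w \in L$, and then show that any isometry acts independently on each level.

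First I would compute all pairwise distances explicitly. Using the common prefix/suffix invariance remarked after Proposition 2, two level-$k$ words satisfy $lev_\theta(w_1(01)^{nk}, w_2(01)^{nk}) = lev_\theta(w_1, w_2)$. For the cross-level case $k_1 < k_2$, I observe that $w_1(01)^{nk_1}$ is always a subsequence of $w_2(01)^{nk_2}$: skip over $w_2$, read $w_1$ out of the first $(01)^n$-block (any binary word of length $n$ embeds into $(01)^n$ by picking one character per block), then read $(01)^{nk_1}$ out of the remaining $(01)^{n(k_2-1)}$ suffix. Combined with the inequality $lev_\theta(u,v) \geq ||u|-|v||$ (with equality precisely when the shorter word is a subsequence of the longer), this yields $lev_\theta(w_1(01)^{nk_1}, w_2(01)^{nk_2}) = 2n(k_2-k_1)$, independent of the choice of $w_i$.

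Next I would show that every isometry preserves each level setwise. Cross-level distances are positive multiples of $2n$ and within-level distances are at most $\theta n \leq 2n$. For $\theta < 2$ the two ranges are strictly separated, so \emph{being at distance less than $2n$} coincides with \emph{being on the same level}, which is therefore invariant; the induced permutation of $\mathbb{N}_0$ preserves the distance function $2n|k_1-k_2|$ and must be the identity. For $\theta = 2$ the ranges touch at $2n$, so I would use a counting invariant: the number of words at distance exactly $2n$ from a given word $u$ is at most $2|L|-1$ when $u$ lies in level $0$ (one full copy of $L$ across the boundary to level $1$, plus at most $|L|-1$ within-level neighbors) but at least $2|L|$ when $u$ lies in a higher level (two full copies of $L$ across the boundaries to levels $k \pm 1$). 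This separates level $0$ from every higher level, and peeling it off leaves a language of the same structure, so all levels are invariant by induction.

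Finally, because cross-level distances depend only on the level difference and not on the $w$-components, any family $(\phi_k)_{k \geq 0} \in \Pi_{k \geq 0}\, Isom_\theta(L)$ glues to a genuine isometry of $L((01)^n)^*$, and conversely every isometry restricts on each level to an element of $Isom_\theta(L)$. This yields the claimed isomorphism $Isom_\theta(L((01)^n)^*) \cong Isom_\theta(L)^{\mathbb{N}}$. The main obstacle I anticipate is the $\theta = 2$ case of level invariance, where the metric separation between within- and cross-level distances breaks down and the counting argument above is required; everything else follows mechanically from the explicit distance formula.
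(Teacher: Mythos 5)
Your proposal is correct and follows essentially the same route as the paper: the same decomposition into levels $\{w(01)^{nk} : w \in L\}$, the same explicit distance formula (cross-level distance $2n|k_2-k_1|$ via the subsequence criterion, within-level distance $lev_\theta(w_1,w_2)$ via prefix invariance), the same gluing of a family $(\phi_k)$ into a global isometry, and the same neighbor-counting device to show levels are preserved. If anything, your treatment of the $\theta = 2$ boundary case is slightly more careful than the paper's, which asserts \emph{exact} neighbor counts ($2|L|$ at distance $2tn$ for $t \leq k$) that can be perturbed by within-level pairs at distance exactly $2n$; your one-sided bounds ($\leq 2|L|-1$ versus $\geq 2|L|$) sidestep that issue.
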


\begin{proof}
Let $u, v \in L$ and $p, q \in \mathbb{N}$. Let us show that
$$lev_\theta(u(01)^{pn}, v(01)^{qn}) = \begin{cases} 2n|p-q| & \quad p \neq q \\ d(u, v) & \quad p = q \end{cases}$$
\newline
Indeed, in the first case, a longer word is obtained from a shorter one by removing everything superfluous. In the second case, it is enough to perform transformations only on different prefixes.
\newline
From this, in particular, it follows that any transformation of the form $\phi: u(01)^{kn} \mapsto \psi(k)(u)(01)^{kn} $, where $\psi$ -- - an arbitrary function $\mathbb{N} \to Isom_\theta(L)$, is an isometry of $(L((01)^{n})^*, lev_\theta)$.
\newline
To verify that there are no other isometries, note that $u(01)^{kn}$ has exactly $2|L|$ "neighbours" at a distance of $2tn$, for $t \leq k$ and exactly $ |L|$ for $t > k$.

\end{proof}

Using Theorem 2, we can construct uniform languages $L_1 \subset \{0, 1\}^n$ and $L_2 \subset \{0, 1\}^m$ such that $Isom_\theta(L_1) \cong G$, while $Isom_\theta(L_2) \cong H$.
\newline
By Lemma 3 $Isom_\theta(L_2((01)^m)^*) \cong H^\mathbb{N}$.
\newline
Now consider the language $L_1 \cup (01)^{n + m}L_2((01)^m)^*$. It is easy to see that $lev_\theta(u, (01)^{n+m}v(01)^ {mk}) = 2(n + (k+2)m)$.
\newline
Hence, for every $\phi \in Isom_\theta(L_1)$ and $\psi \in Isom_\theta(L_2((01)^{mk})^*)$ the map $\chi$ taking all $u \in L_1$ into $\phi(u)$, and all $(01)^{n+m}v$, where $v \in L_2((01)^{mk})^*$, --- in $(01)^{n+m}\psi(v)$, is an isometry of $L_1 \cup (01)^{n+m}L_2((01)^{mk})^*$.
\newline
Moreover, there can be no other isometries, since the word $v$ from the new language belongs to $L_1$ if and only if it has no "neighbors" at a distance greater than $n$ but less than $n + 2m$.
\newline
So $Isom_\theta(L_1 \cup (01)^{n+1}L_2((01)^{mk})^*) \cong G \times H^\mathbb{N}$. Moreover, $L_1 \cup (01)^{n+1}L_2((01)^{mk})^*$ is regular by construction. $\square$

\section{Proof of Theorem 6}

Consider the language $L = (010)^*(110)(010)^* \cap (\{0, 1\}^6)^*$. Let us show that for arbitrary two words $u, w \in L$ such that $|u| - |v| = 6$, $v$ can be obtained from $u$ with exactly $6$ deletions. Note that at positions dividing $3$ (we assume that the numbering of positions starts from $0$), each word can contain only one unit. Let these special units in the words $u$ and $v$ be at positions $3i$ and $3j$, respectively. Then, if $i = j$, it suffices to remove the suffix of length $6$ (of the form $010010$) from $u$. Otherwise, you must first remove the subword $u_{3i}u_{3i+1}u_{3i + 2} = 110$ from $u$, and then from the subword $u_{3j}u_{3j+1}u_{3j +2}u_{3j+3} = 0100$ (numbering in the new order after deletion) remove $u_{3j}$ and $u_{3j+2}u_{3j+3}$. Thus we get $v$.
\newline
From the above it follows by induction that for $u, v \in L$
\newline
$$lev_\theta(u, v) = \max(||u|-|v||, 2)$$. Thus we see that the distance between words does not depend on anything other than their length. This means that for any sequence of permutations $\sigma_i$ of elements $L \cap A^{6i}$ the map $\phi: u \mapsto \sigma_{\frac{|u|}{6}}(u)$ is an isometry .
\newline
At the same time, the absence of other isometries follows from the fact that each word $u \in L$ has exactly $\frac{2|u|}{3}$ neighbors at a distance of $6$.
\newline
That is, since $|L \cap A^{6i}| = 2i$, $Isom_\theta(L) \cong \Pi_{n=1}^\infty S_{2n}$. $\square$

\section{Thanks}

I express my gratitude to Anton Klyachko and Alexander Olshansky for valuable comments on my work, as well as to Alexei Talambutsa for information about Higman's paper \cite{higman}, which contains the lemma used by me in the proof of Theorem 1.

\printbibliography[title={Bibliography}] 

@book{levenshtein,
  author =       "V.I. Levenshtein",
  title =        "Binary codes capable of correcting deletions, insertions, and reversals.",
  publisher =      "Soviet physics. Doklady, Vol. 163",
  pages =        "845-848",
  year =         "1965"
}

@book{frucht,
  author =       "R. Frucht",
  title =        "Graphs of degree three with a given abstract group",
  publisher =      "Canadian Journal of Mathematics, Vol. 1",
  pages =        "365–378",
  year =         "1949"
}

@book{mrk,
  author =       "A.A. Markov",
  title =        "On transformations, not spreading distortions",
  publisher =      "Selected works, Vol. II",
  pages =        "70--94",
  year =         "1956"
}

@book{higman,
  author =       "G. Higman",
  title =        "Ordering by divisibility in abstract algebras",
  publisher =      "Proceedings of the London Mathematical Society, Vol. 2",
  pages =        "326--336",
  year =         "1952"
}

@book{wagfish,
  author =       "R. Wagner and M. Fisher",
  title =        "The string to string correction problem",
  publisher =      "Journal of the
ACM, Vol. 21",
  pages =        "168--178",
  year =         "1974"
}

@book{ruthladser,
  author =       "P.E. Ruth and M.E. Ladser",
  title =        "Levenshtein graphs: resolvability, automorphisms and determining sets",
  publisher =      "arXiv:2107.06951",
  year =         "2021"
}

\end{document}